\documentclass{amsart}
\usepackage{amsmath,amsfonts,amssymb,stmaryrd}
\usepackage{graphicx,psfrag,subfigure}
\newtheorem{thm}{Theorem}
\newtheorem{rk}{Remark}
\newtheorem{prop}{Proposition}
\newtheorem{clly}{Corollary}
\newtheorem{lemma}{Lemma}
\newtheorem{defi}{Definition}
\newcommand{\pf}{{\flushleft{\bf Proof: }}}
\newcommand{\R}{{\mathbb{R}}}
\newcommand{\C}{{\mathbb{C}}}
\newcommand{\Z}{{\mathbb{Z}}}
\newcommand{\N}{{\mathbb{N}}}
\newcommand{\T}{{\mathbb{T}}}
\newcommand{\D}{{\mathbb{D}}}
\begin{document}
\title{Surface attractors.}

\author[J. Iglesias]{J. Iglesias}
\address{Universidad de La Rep\'ublica. Facultad de Ingenieria. IMERL.
Julio Herrera y Reissig 565. C.P. 11300. Montevideo, Uruguay}
\email{jorgei@fing.edu.uy }
\author[A. Portela]{A. Portela}
\address{Universidad de La Rep\'ublica. Facultad de Ingenieria. IMERL.
Julio Herrera y Reissig 565. C.P. 11300. Montevideo, Uruguay }
\email{aldo@fing.edu.uy }
\author[A. Rovella]{A. Rovella}
\address{Universidad de La Rep\'ublica. Facultad de Ciencias.
Centro de Matem\'atica. Igu\'a 4225. C.P. 11400. Montevideo,
Uruguay} \email{leva@cmat.edu.uy}
\author[J. Xavier]{J. Xavier}
\address{Universidad de La Rep\'ublica. Facultad de Ingenieria. IMERL.
Julio Herrera y Reissig 565. C.P. 11300. Montevideo, Uruguay }
\email{jxavier@fing.edu.uy }

\date{\today}
\begin{abstract}
Let $f$ be a continuous endomorphism of a surface $M$, and $A$ an
attracting set such that the restriction $f|_A: A \to A$ is a
$d:1$ covering map.
We show that if $f$ is a local homeomorphism in the
immediate basin $B^0_A$ of $A$, then $f$ is also a $d:1$ covering of $B^0_A$.
\end{abstract}
\maketitle
\begin{section}{Introduction}

Let $f$ be an endomorphism of a topological space $X$ and $U$ a nonempty
open proper subset of $X$. The pair $(f,U)$ is an {\em attracting pair}
of $X$, if the closure of $f(U)$ is contained in $U$.
By discarding the possibilities $U=\emptyset$ and $U=X$ we avoid trivial cases.
The {\em attracting set} associated to the attracting pair $(f,U)$ is defined as
$$
A=A(f,U)=\bigcap_{n\geq 0}f^n(U).
$$

Examples of attracting sets frequently appear.
By a theorem of C.Conley (\cite{con}), a homeomorphism which is not chain
recurrent defined in a compact space $X$, always has an attracting set.
Moreover, hyperbolic attractors are always attracting sets.

The basin of the attracting pair, also called the basin of the
attracting set $A$, is equal to $B=B_A=\cup_{n\geq 0}f^{-n}(U)$.
The immediate basin of $A$ is the union of the connected components of
$B$ that intersect $A$, and is denoted by $B^0_A$.

A point $p$ is a critical point of a continuous map $f$ if $f$ is
not local homeomorphism at $p$, meaning that there does not exist
a neighborhood $V$ of $p$ such that the restriction of $f$ to $V$
is a homeomorphism onto $f(V)$. The set of critical points of $f$
is denoted by $S_f$.

It is easy to prove that whenever $U$ is relatively compact, the
attracting set $A$ is compact, and if, in addition, $X$ is locally connected,
then $A$ has finitely many components.

We are interested in a kind of global problem.
Does there exist a preimage of $A$ contained in the immediate basin of $A$?
More precisely, we want to know if $\tilde A=f^{-1}(A)\setminus A$ may intersect $B^0_A$.

We will prove the following:

\begin{thm}
\label{intro1}
If $M$ is a compact, oriented, two dimensional manifold and $f$ is injective in $U$, then either $f$ is injective
in $B^0_A$ or there are critical points of $f$ in $B^0_A$.
\end{thm}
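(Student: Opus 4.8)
The plan is to prove the contrapositive: assuming $f$ has no critical points in $B^0_A$ — i.e.\ $f$ restricted to $B^0_A$ is a local homeomorphism — I will show $f|_{B^0_A}$ is injective. Note first that $f$ injective on $U$ forces the covering $f|_A$ to be $1{:}1$, so $f|_A$ is a homeomorphism of $A$; since $M$ is compact, one also gets $f(A)=A$ by the usual nested-compacta argument.

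\textbf{Covering structure.} From $B=\bigcup_{n}f^{-n}(U)$ one checks $f(B)\subseteq B$ and $f^{-1}(B)\subseteq B$; since $f$ carries a connected component of $B$ meeting $A$ onto a connected subset of $B$ that again meets $A=f(A)$, it follows that $f(B^0_A)\subseteq B^0_A$. Now $B^0_A$ is a union of connected components of the open set $B$, hence clopen in $B$; as $M$ is compact, for every compact $K\subseteq B^0_A$ the set $f^{-1}(K)$ is compact, is contained in $B$, and meets $B^0_A$ in a subset closed in $f^{-1}(K)$ (its complement in $f^{-1}(K)$ is open, being $f^{-1}(K)\cap(B\setminus B^0_A)$), so $f^{-1}(K)\cap B^0_A$ is compact. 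Thus $f|_{B^0_A}\colon B^0_A\to B^0_A$ is a proper local homeomorphism of a surface, hence (standard) a covering onto its clopen image with finite fibres; the image contains $f(A)=A$, which meets each of the finitely many components of $B^0_A$, so $f|_{B^0_A}\colon B^0_A\to B^0_A$ is a finite covering map of $B^0_A$ onto itself.

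\textbf{Reduction and the degree.} As $f$ permutes the finitely many components of $B^0_A$ it permutes them bijectively, and $f|_A$ permutes the components of $A$; after replacing $f$ by a suitable iterate (which changes neither the critical set in $B^0_A$, nor injectivity of $f$ on $U$, nor the attracting set, and whose injectivity on $B^0_A$ implies that of $f$) we may assume $f$ fixes a component $C$ of $B^0_A$ and each component of $A\cap C$; then $f\colon C\to C$ is a covering of some degree $e\ge 1$ with $f|_{A\cap C}$ a homeomorphism, and everything reduces to showing $e=1$. The two facts I would use are: (i) every periodic point of $f|_C$ lies in $A\cap C$ (its finite orbit stays in $C$ and its $\omega$-limit lies in $\bigcap_n\overline{f^n(U)}=A$), a set on which $f$ is injective; and (ii) a degree-$e$ self-covering of a surface of \emph{finite type} $Y$ satisfies $\chi(Y)=e\,\chi(Y)$, so $e\ge 2$ forces $\chi(Y)=0$, i.e.\ an open annulus, in which case a Lefschetz/fixed-point-index computation — the only periodic points being those of the homeomorphism $f|_{A\cap C}$ inside the invariant set, while the action of $f$ on $H^{*}_{c}(Y)$ is governed by the degree $e$ — is incompatible with $e\ge 2$; here orientability of $M$ is what makes the degree and index bookkeeping unambiguous.

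\textbf{Main obstacle.} The real difficulty is that $B^0_A$, hence $C$, need not be of finite topological type nor an ANR near $A$, so step (ii) cannot be applied to $C$ directly; this is the heart of the argument. I would localize near the compact set $A\cap C$: by Alexander duality in the surface, a small regular neighbourhood $N$ of $A\cap C$ satisfies $H^{*}(N)\cong\check H^{*}(A\cap C)$, and $N$ may be chosen so that $f(N)\subseteq\operatorname{int}N$, $\bigcap_n f^n(N)=A\cap C$, and $f|_N$ is injective. Repeatedly pulling $N$ back along the covering $f\colon C\to C$ produces an exhaustion of a cofinal part of $C$ by surfaces of finite type, on which the Euler-characteristic and fixed-point obstructions can be run; the leverage is that $f|_N$ is a homeomorphism onto $f(N)$, which keeps the topology of the successive preimages under control as one moves outward, and that all periodic orbits encountered remain inside $A\cap C$. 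Making this exhaustion precise — and verifying it detects all of the covering degree $e$ — is the step I expect to be delicate; the rest (the covering structure and the reduction to a connected self-covering) is essentially formal.

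Once $e=1$ on every component, $f|_{B^0_A}$ is injective, and undoing the passage to the iterate completes the proof.
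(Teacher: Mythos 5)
Your first two steps are sound and even give a cleaner framing than the paper's: the observation that $f|_{B^0_A}$ is a proper local homeomorphism of $B^0_A$ onto itself, hence a finite covering, is correct (properness via $B^0_A$ being clopen in $B$, surjectivity via the image being clopen and meeting every component through $A=f(A)$), and the reduction to a connected self-covering $f\colon C\to C$ of some degree $e$ by passing to an iterate is legitimate. But the proof then stops exactly where the theorem lives: you must show $e$ equals the degree of $f|_A$ (here $1$), and your proposed mechanism for this does not work as described. The Euler-characteristic identity $\chi(Y)=e\,\chi(Y)$ only excludes $e\ge 2$ when $\chi(Y)\neq 0$, and the annulus case $\chi=0$ is not a degenerate corner to be cleaned up afterwards --- it is the generic situation (the paper's Theorem 2 shows that when the degree is $>1$ the attractor sits essentially in an annulus, and the model $f(z,y)=(z^d,y/2)$ on $S^1\times\R$ is a proper degree-$d$ self-covering of an open annulus whose periodic points all lie in the invariant circle). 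So the only thing separating the case you must exclude from a case that actually occurs is the injectivity of $f$ on $A$, and a Lefschetz/index count on a non-compact annulus does not obviously see that: a homeomorphism of $A$ can have arbitrarily many fixed points, so counting periodic points in $A\cap C$ yields no contradiction without a delicate index bookkeeping that you have not supplied. You flag the exhaustion of $C$ by finite-type surfaces as "delicate"; in fact it is the entire content of the theorem, and the dimension-three counterexample in the paper ($f(z,w)=(z^2,(z+w)/3)$ on $S^1\times S^2$) shows that whatever closes this gap must use two-dimensionality in a way your sketch does not yet pin down.

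For comparison, the paper closes precisely this gap by a topological, not fixed-point-theoretic, argument: after proving that $M\setminus A$ has finitely many components (Proposition \ref{p1}), it constructs via the genus/nexus machinery a neighborhood $U'\in\mathcal G_\kappa(A)$ with $\overline{f(U')}\subset U'$ such that $U'\setminus f(U')$ is a finite union of annuli (Proposition \ref{p2} and Corollary \ref{c3}); hence $f(U')$ is a deformation retract of $U'$ and $i_*\colon\pi_1(f(U'))\to\pi_1(U')$ is onto (the attracting pair is \emph{simple}). Lemma \ref{simple} then propagates the preimage count from $U'$ to all of $B^0_A$ by lifting arcs and homotopies --- this is where the degree on $A$ is forced to equal the degree on the basin. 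In the three-dimensional counterexample it is exactly the surjectivity of $i_*$ that fails ($\pi_1(f(U))\to\pi_1(U)$ is multiplication by $2$ on $\Z$), which is a good sanity check that this, or something equivalent to it, is the missing ingredient in your argument. If you want to salvage your route, you would need to prove that your regular neighborhood $N$ of $A\cap C$ can be chosen so that the inclusion $f(N)\hookrightarrow N$ is $\pi_1$-surjective; at that point you have reconstructed the paper's key lemma and the Lefschetz apparatus becomes unnecessary.
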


This result has important consequences regarding $C^1$ structural stability of endomorphisms.
Indeed, it is well known that a $C^1$ stable map must be Axiom A,
thus there exist transitive attracting sets. Moreover, as proved
by Przytycki \cite{prz}, the stability implies also that the map
is injective in every attractor $A$, so the above hypothesis are
fulfilled whenever $f$ is stable. In addition, as critical points
are forbidden for $C^1$ stability, one has the following
consequence.
\begin{clly}
If $f$ is a $C^1$ stable map of an oriented compact surface $M$, then its
restriction to the immediate basin of an attractor is injective.
\end{clly}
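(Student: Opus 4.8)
The plan is to derive the corollary directly from Theorem~\ref{intro1}, using as external inputs the two facts recalled just above the statement. First I would note that $C^1$ structural stability forces $f$ to satisfy Axiom~A, so its nonwandering set decomposes into basic pieces and the attractors among them are exactly the transitive attracting sets; I fix such an attractor $A$. By Przytycki's theorem \cite{prz}, stability makes $f|_A$ injective, and since stability also forbids critical points, $S_f=\emptyset$, so $f$ is a local homeomorphism --- hence an open map --- on all of $M$. Granting an attracting pair $(f,U)$ with $A(f,U)=A$ and $f$ injective in $U$, Theorem~\ref{intro1} gives that $f$ is injective in $B^0_A$ or has a critical point there, and the second alternative is impossible because $S_f=\emptyset$; this is precisely the corollary.

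The substantive step is to produce that pair $(f,U)$, i.e.\ to upgrade Przytycki's injectivity on the compact set $A$ to injectivity on a trapping neighborhood. I would first argue that injectivity propagates from $A$ to an open neighborhood $V\supset A$: if not, one finds $x_n\ne y_n$ tending to $A$ with $f(x_n)=f(y_n)$, extracts subsequences with $x_n\to x\in A$ and $y_n\to y\in A$, infers $f(x)=f(y)$ hence $x=y$ from injectivity of $f|_A$, and then reaches a contradiction because for large $n$ the points $x_n,y_n$ lie in a neighborhood of $x$ on which $f$ is a homeomorphism (here $x\notin S_f$ is used). Then, since $A$ is an attracting set and $f$ is open, $A$ has a fundamental system of open trapping neighborhoods: from a relatively compact open $W\supset A$ with $\overline{f(W)}\subset W$ and $\bigcap_n f^n(W)=A$, the open sets $f^n(W)$ decrease to $A$ and satisfy $\overline{f^{n+1}(W)}\subset f^n(W)$, so choosing $n$ with $U:=f^n(W)\subset V$ gives an attracting pair $(f,U)$ with $f$ injective in $U$ and $A(f,U)=\bigcap_k f^k(U)=A$.

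With $U$ fixed, applying Theorem~\ref{intro1} is immediate (and $B^0_A$ is open and contains $A$, since $B_A$ is open and $M$ is locally connected), and the critical-point alternative drops out because $S_f=\emptyset$ everywhere. I expect the only place that needs care is this last transfer of Przytycki's result from $A$ to a neighborhood: it is automatic here exactly because $C^1$ stability excludes critical points, whereas in general a folding map could be injective on $A$ and on no neighborhood of it, so the hypothesis of Theorem~\ref{intro1} that $f$ be injective in $U$ would not be available without the no-critical-points input.
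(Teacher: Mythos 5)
Your proposal is correct and follows essentially the same route as the paper, which derives the corollary in the paragraph preceding its statement from the three inputs you cite (Axiom A, Przytycki's injectivity on attractors, absence of critical points) together with Theorem~\ref{intro1}. The one step you supply in detail --- upgrading injectivity on the compact set $A$ to injectivity on a trapping neighborhood $U$ via the no-critical-points hypothesis, so that the hypothesis of Theorem~\ref{intro1} is actually met --- is exactly the point the paper passes over with ``so the above hypothesis are fulfilled,'' and your compactness argument for it is sound.
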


Other interesting conclusions can be derived from Theorem \ref{intro1};
for example, it can be proved that a $C^1$ stable map which is not a diffeomorphism nor it is expanding must have a saddle type basic piece
and a periodic attractor. These assertions are not trivial consequences, and we left their proofs to
forthcoming works.

Notice that theorem \ref{intro1} generalizes the corresponding results in one dimensional
manifolds. Indeed, the result has a trivial proof if $X$ is equal to the circle $S^1$. Moreover,
it is well known that for a rational map in the Riemann sphere every attractor is a periodic orbit
and the local inverse of $f$ defined in a neighborhood of the attractor can be extended until its domain
hits a critical point of $f$.

However, as will be shown in the final section, the result cannot be extended to manifolds of dimensions
three or more.

It is natural to ask if some similar conclusion as in Theorem \ref{intro1} can be deduced when the map $f$
is not one to one, but is a covering map from $U$ to $f(U)$. In this case, as will shown the examples on the
final section, it may happen that different points in $A$ may have different number of preimages in $A\cap U$,
so to extend the result above we need a restriction.

\begin{defi}
\label{normal} An attracting pair $(f,U)$ is called normal if
$f^{-1}(A)\setminus A$ is a closed set and $S_f\cap  U=\emptyset$.
\end{defi}
In particular, this holds whenever $f$ is injective in $U$.
We will prove in the next section that if $A$ is a connected
attracting set, then $(f,U)$ is normal if and only if  $f:A\to A$
is a covering map. The degree $d$ of this covering will be called
the degree of $A$.

\begin{thm}
\label{intro2}
Let $M$ be an oriented, compact surface, and  $(f,U)$ a normal attracting pair with associated attracting set $A$.\\
If $f$ restricted to $A$ is a covering of degree $d>1$ and $S_f\cap B^0_A=\emptyset$,
then the restriction of $f$ to the immediate basin of $A$ is also a
$d:1$ covering. In addition, the attracting set is an essential continuum in an annulus.
\end{thm}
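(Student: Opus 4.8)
The plan is to first upgrade the hypothesis $S_f\cap B^0_A=\emptyset$ to ``$f|_{B^0_A}$ is a finite covering'', then to determine the topology of $B^0_A$, and finally to compute its degree. I will freely use the elementary facts, valid here, that $B:=B_A$ is fully invariant ($f^{-1}(B)=B$), that $f^n\to A$ uniformly on compact subsets of $B$ (because $f^n(\overline U)$ decreases to $A$ in the Hausdorff metric), that the connected components of the open set $B$ are also closed in $B$, and that by normality $\tilde A=f^{-1}(A)\setminus A$ is closed and disjoint from $A$, so $A$ is open and closed in $f^{-1}(A)$. Since the degree of $A$ is defined, $A$ is connected; hence $B^0_A$ is connected, $f(A)=A$, and $f(B^0_A)\subseteq B^0_A$.

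\emph{Step 1: $f|_{B^0_A}$ is a $d'$-fold covering with $d'\ge d$.} First, $f\colon B^0_A\to B^0_A$ is proper: if $K\subseteq B^0_A$ is compact then $f^{-1}(K)$ is compact in $M$; moreover, if $x\in\overline{B^0_A}$ and $f(x)\in B^0_A\subseteq B$, then $x\in f^{-1}(B)=B$, hence $x\in\overline{B^0_A}\cap B=B^0_A$, so $f^{-1}(K)\cap\overline{B^0_A}=f^{-1}(K)\cap B^0_A$ is compact. As $f|_{B^0_A}$ is also a local homeomorphism, it is a covering of some finite degree $d'$. Pulling this covering back over $A$ gives a covering $f^{-1}(A)\cap B^0_A\to A$; since $A$ is clopen in $f^{-1}(A)$ it is clopen in $f^{-1}(A)\cap B^0_A$, and the restriction of the covering to the clopen piece $A$ is the given map $f|_A$, of degree $d$. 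Hence $d'\ge d$, and it remains to prove $d'=d$ and the annulus statement. (Observe also: if $A$ were contained in an open disk $D\subseteq B^0_A$, then $A$, being connected, would lie in a single sheet of $f^{-1}(D)\cap B^0_A$, on which $f$ is a homeomorphism onto $D$, forcing $f|_A$ injective and $d=1$; so $A$ is contained in no disk.)

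\emph{Step 2: $B^0_A$ is an open annulus.} Choose a trapping region $K$: a compact connected subsurface with nonempty boundary, $A\subseteq\operatorname{int}K\subseteq K\subseteq B^0_A$ and $f(K)\subseteq\operatorname{int}K$ (such $K$ exist since $A$ is a compact attracting set, $f$ is open near $A$, and $A\ne M$). Put $W_n=f^{-n}(K)\cap B^0_A$. Using Step 1 one verifies: $W_n$ is a compact subsurface with $W_n\subseteq\operatorname{int}W_{n+1}$; $\bigcup_n W_n=B^0_A$ (every point of $B^0_A$ eventually enters $\operatorname{int}K$); and $f\colon W_{n+1}\to W_n$ is a $d'$-fold covering, so $\chi(W_n)=d'^{\,n}\chi(K)$. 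Likewise $f\colon f^{n}(K)\to f^{n+1}(K)$ is a covering of some degree $e_n$, whence $\chi(K)=e_0e_1\cdots e_{n-1}\,\chi\!\big(f^{n}(K)\big)$. Each $f^n(K)$ is a compact connected subsurface with nonempty boundary containing $A$, hence is not a disk (by Step 1), so $\chi(f^n(K))\le0$. If $\chi(K)<0$ then every $\chi(f^n(K))$ is a negative integer, so $e_0\cdots e_{n-1}$ divides $|\chi(K)|$ for all $n$; the product is therefore bounded, $e_n=1$ for all large $n$, and $f$ restricts to a homeomorphism of $f^{n}(K)$ onto $f^{n+1}(K)$; as $A\subseteq f^n(K)$ and $f(A)=A$, this makes $f|_A$ injective, contradicting $d>1$. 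If $\chi(K)=1$ then $\chi(W_n)=d'^{\,n}\le1$ forces $d'=1$, impossible. Hence $\chi(K)=0$, so $K$ is an annulus; each $W_n$ is then an orientable surface of genus $\le\operatorname{genus}(M)$ with $\chi=0$, i.e.\ an annulus, and $\pi_1(B^0_A)=\varinjlim\pi_1(W_n)$ is either trivial or $\Z$ (embedded annuli act on $\pi_1=\Z$ by $0$ or $\pm1$); since $d'\ge2$ rules out the trivial case, $\pi_1(B^0_A)=\Z$ and $B^0_A$ is an open annulus. This step — showing the immediate basin has finite type — is the crux of the argument.

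\emph{Step 3: $A$ is essential and $d'=d$.} By Step 1 and Step 2, $A$ lies in no disk of the annulus $B^0_A$, so it separates the two ends: $A$ is an essential continuum in an annulus, as claimed. In the end-compactification $S^2=B^0_A\cup\{e^+,e^-\}$ the points $e^+,e^-$ lie in distinct components of $S^2\setminus A$; by Alexander duality $\check H^1(A;\Z)\cong\tilde H_0(S^2\setminus A;\Z)$ is free abelian, and the image $\alpha$ of the generator of $H^1(B^0_A;\Z)\cong\Z$ under restriction is a primitive element. Naturality of restriction gives $(f|_A)^*\alpha=\pm d'\,\alpha$, since $f^*$ acts on $H^1$ of the annulus as multiplication by $\pm d'$. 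On the other hand, $f|_A$ being a $d$-fold covering, its transfer $\tau\colon\check H^1(A;\Z)\to\check H^1(A;\Z)$ satisfies $\tau\circ(f|_A)^*=d\cdot\mathrm{id}$; evaluating at $\alpha$ yields $\pm d'\,\tau(\alpha)=d\,\alpha$. As $\alpha$ is primitive in the free group $\check H^1(A;\Z)$, this forces $d'\mid d$, so $d'\le d$; with Step 1, $d'=d$, and $f$ restricted to $B^0_A$ is a $d$-fold covering. The remaining work is the routine verification that the various restricted maps above are genuine coverings, which rests on $B^0_A$ being closed in $B$, on the absence of critical points in $B^0_A$, and on $f(K)\subseteq\operatorname{int}K$.
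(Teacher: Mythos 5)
Your Steps 1 and 3 are essentially sound and genuinely different from the paper's route: properness of $f$ on $B^0_A$ immediately upgrades the local homeomorphism to a finite covering of some degree $d'\ge d$, and the transfer argument in \v Cech cohomology is a clean substitute for the paper's induction on annular neighborhoods to force $d'=d$. The problem is Step 2, and Step 2 is where the actual content of the theorem lives. You assert that $f\colon f^{n}(K)\to f^{n+1}(K)$ is a covering of compact subsurfaces and use the resulting divisibility of $\chi(K)$ to exclude $\chi(K)<0$. Neither half of that assertion is justified, and both fail in general: the forward image $f(K)$ of a compact subsurface under a local homeomorphism need not be a subsurface (its frontier lies in $f(\partial K)$, a union of immersed circles that may cross themselves or one another), and even when $f(K)$ is a subsurface, $f\colon K\to f(K)$ need not be a covering, since the number of $K$-preimages of a point of $f(K)$ need not be locally constant (already for $z\mapsto z^2$ on $\C\setminus\{0\}$ and a disk $K$ meeting $-K$ without equalling it). Your $K$ is an arbitrary trapping region, not a full preimage of anything, so nothing forces the sheets over $f(K)$ to be complete. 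Note also that the backward tower alone cannot rescue you: $\chi(W_n)=d'^{\,n}\chi(K)\to-\infty$ is perfectly consistent with the $W_n$ being planar surfaces with ever more boundary circles, so the case $\chi(K)<0$ remains open.

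This is precisely the point where the paper has to work. It first proves that $M\setminus A$ has finitely many components (Proposition \ref{p1}), then introduces the minimal-genus neighborhoods $\mathcal G(A)$ and the nexus, and chooses $V\in\mathcal G_\kappa(A)$ so that the component $V'$ of $f^{-1}(V)$ containing $A$ has the same genus as $V$ (trapped between $A$ and a minimal-genus neighborhood) and the same number of boundary components (because the components of $V\setminus A$ are half-open annuli, whose covers are again annuli); only then does $\chi(V')=d\,\chi(V)$ force $\chi(V)=0$ (Theorem \ref{t2}). Some such simultaneous control of genus and boundary count for the pulled-back neighborhood is indispensable, and your Euler-characteristic bookkeeping on forward images does not supply it. Two smaller repairable slips: in the case $\chi(K)=1$ you invoke $\chi(W_n)\le 1$, which presumes $W_n$ connected (though that case is already killed by your Step 1 remark that $A$ lies in no disk), and "orientable surface with $\chi=0$, i.e.\ an annulus" should read "each component of $W_n$ is a finite cover of the annulus $K$, hence an annulus."
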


Normal attractors have another interesting property: the complement of $A$ also has finitely many components.
It will be shown that if $A$ is a hyperbolic attractor of a $C^1$ map $f$ on a two dimensional manifold,
and the restriction of $f$ to $A$ is $d:1$ with $d$ greater than one, then the restriction of $f$ to the immediate basin of $A$
is conjugated to
$$
(z,y)\in S^1\times \R\to (z^d, y/2).
$$
Many properties of this map are well known (see \cite{bkru} and \cite{tsu}) and will be explained in the final section.
It follows that for a generic perturbation $f'$ of $f$ it holds that $(f',U)$ is an attracting pair that
fails to be normal, and the complement of the attracting set may have infinitely many components.
We will also show that the assertions of the theorem and the corollary are false in manifolds of dimension at
least three.

\end{section}

\begin{section}{Finiteness of components.}

This section is devoted to the statement of some general facts concerning normal attracting pairs.
The main goal is to prove the following:
\begin{prop}
\label{p1}
If $M$ is a compact oriented manifold without boundary, and $(f,U)$ is a normal attracting pair in $M$, then
$M\setminus A$ has finitely many components.
\end{prop}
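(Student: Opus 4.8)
\noindent{\bf Proof (sketch).}
The plan is to reduce the statement, via Alexander duality, to the finiteness of a single \v{C}ech cohomology group of $A$, and then to extract that finiteness from the hypothesis that $f$ restricts to a $d{:}1$ covering of $A$. First I record the elementary consequences of the hypotheses: as $\overline U$ is relatively compact, $A$ is compact, has finitely many components, and satisfies $A\subseteq\overline{f(U)}\subseteq U$; since $S_f$ is closed and contained in $M\setminus U$, the map $f$ is a local homeomorphism on an open neighbourhood of $A$; and by normality $\widetilde A=f^{-1}(A)\setminus A$ is closed and disjoint from the compact set $A$. Because $A$ is attracting one can fix an open neighbourhood $\mathcal U$ of $A$ with $\overline{\mathcal U}\subseteq U$, $\overline{f(\mathcal U)}\subseteq\mathcal U$, $\overline{\mathcal U}\cap\widetilde A=\emptyset$ and $\bigcap_{n}f^{n}(\mathcal U)=A$; the relations $f(\mathcal U)\subseteq\mathcal U$ and $f^{-1}(A)\cap\mathcal U=A$ then give, by induction on $n$, that $f^{-n}(A)\cap\mathcal U=A$ for every $n\ge 0$.

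Next I would set up a convenient tower of neighbourhoods of $A$. Choose a finite union $K_{0}$ of small closed balls with $A\subseteq\mathrm{int}(K_{0})$ and $K_{0}\subseteq\mathcal U$, and pick $n_{0}$ with $f^{n_{0}}(\overline{\mathcal U})\subseteq K_{0}$ (possible since the compacta $f^{n}(\overline{\mathcal U})$ decrease to $A$). Then $Z:=K_{0}\cup f(K_{0})\cup\cdots\cup f^{n_{0}}(K_{0})$ is a compact neighbourhood of $A$ with $f(Z)\subseteq Z$ and $\bigcap_{n}f^{n}(Z)=A$; moreover $Z\subseteq\mathcal U$, so every iterate of $f$ is a local homeomorphism on $Z$, and hence each $f^{n}(Z)$ is a finite union of locally homeomorphic images of closed balls and so has finitely generated \v{C}ech cohomology. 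Since $\{f^{n}(Z)\}_{n}$ is a decreasing sequence of compacta with intersection $A$, continuity of \v{C}ech cohomology gives $\check H^{*}(A;\mathbb{Q})\cong\varinjlim_{n}\check H^{*}(f^{n}(Z);\mathbb{Q})$, a direct limit of finite dimensional $\mathbb{Q}$-spaces along the restriction homomorphisms. If this limit is finite dimensional in degree $\dim M-1$, then Alexander--Lefschetz duality in the closed oriented manifold $M$ identifies it with $H_{1}(M,M\setminus A;\mathbb{Q})$, and the long exact sequence of the pair $(M,M\setminus A)$ bounds the number of connected components of $M\setminus A$ by $\#\pi_{0}(M)+\dim_{\mathbb{Q}}\check H^{\dim M-1}(A;\mathbb{Q})<\infty$, which is the assertion.

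So everything reduces to the stabilisation of the tower $\check H^{\dim M-1}(f^{n}(Z);\mathbb{Q})$, and this is the heart of the matter. For large $n$ the set $f^{n}(Z)$ lies in an arbitrarily small neighbourhood of $A$, and near $A$ the map $f\colon f^{n}(Z)\to f^{n+1}(Z)$ is governed by the prescribed $d{:}1$ covering $f|_{A}\colon A\to A$. The transfer homomorphism $\tau_{n}$ of the (generically $d$-to-$1$) proper local homeomorphism $f\colon f^{n}(Z)\to f^{n+1}(Z)$ satisfies $\tau_{n}\circ f^{*}=d\cdot\mathrm{id}$ on the relevant summand, so $f^{*}$ is injective, and any cohomology of $f^{n}(Z)$ beyond that produced by the self-covering $f|_{A}$ must die under the restriction $\check H^{*}(f^{n}(Z))\to\check H^{*}(f^{n+1}(Z))$; since a self-covering contributes only a fixed finite dimension, the ranks cannot keep growing. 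The delicate point --- and the main obstacle --- is precisely this limiting step: $f$ is $d{:}1$ only near $A$, so at each stage one must first discard the outer part of $f^{n}(Z)$, which shrinks away and carries no cohomology class surviving to $A$, before the transfer identity can be applied; controlling this uniformly in $n$ is where the real work lies.

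Finally, I note that one can dispense with duality and argue more geometrically. If $\mathcal N$ is a finite union of small open balls covering $A$, then $M\setminus\mathcal N$ has finitely many connected components, and since any component of $M\setminus A$ that is not contained in $\mathcal N$ must contain a whole component of $M\setminus\mathcal N$, it suffices to bound the number of components of $M\setminus A$ contained in $\mathcal U$. By $f^{-n}(A)\cap\mathcal U=A$, these ``small'' components are dragged by the iterates of $f$, never meeting $A$, into ever thinner neighbourhoods of $A$, and ruling out their proliferation again hinges on the same use of the $d{:}1$ covering structure.
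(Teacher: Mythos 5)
There is a genuine gap, and you have in fact flagged it yourself: both of your routes (the transfer/\v{C}ech-duality tower and the final geometric sketch) reduce the proposition to a stabilisation statement that you describe as ``where the real work lies'' and then do not prove. In the duality route, the finiteness of $\check H^{\dim M-1}(A;\mathbb{Q})$ is essentially equivalent to the statement you are trying to prove, so nothing has been gained until that tower is controlled; and the transfer identity $\tau_n\circ f^*=d\cdot\mathrm{id}$ only yields injectivity of $f^*$ with rational coefficients, which by itself does not bound the dimensions of $\check H^{*}(f^n(Z);\mathbb{Q})$ (injective self-maps of infinite-dimensional direct limits are perfectly possible), quite apart from the fact that $f\colon f^n(Z)\to f^{n+1}(Z)$ is a $d{:}1$ covering only near $A$. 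In the geometric route, the step you leave open --- ruling out a proliferation of components of $M\setminus A$ inside a small neighbourhood of $A$ --- is exactly the content of the proposition.

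The idea that closes this gap in the paper is elementary and does not use covering degrees or cohomology at all. One first shows (Lemma \ref{l1}) that a sequence of pairwise disjoint connected sets whose boundaries all meet each of two fixed disjoint closed sets $A_1,A_2$ must be finite: disjointness forces all but finitely many of them to have small diameter, and a small connected set cannot reach both $A_1$ and $A_2$ when these are at positive distance. One then shows (Lemma \ref{l2}) that no component of $M\setminus A$ is contained in $U$: such a component would generate, via a backward orbit of the induced map $F$ on $\Pi_0(U\setminus A)$ (surjective because $f(U)$ is a neighbourhood of $A$), an infinite pairwise disjoint family of connected sets each of whose boundaries meets both $A$ and $\partial U$, contradicting Lemma \ref{l1}. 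Consequently every component of $U\setminus A$ has boundary meeting both $A$ and $\partial U$, so by Lemma \ref{l1} again there are finitely many of them, and since every component of $M\setminus A$ meets $U$, the proposition follows. If you want to salvage your write-up, the missing ingredient is precisely this ``no trapped components'' lemma together with the disjointness-plus-distance argument; the duality framework can then be discarded entirely.
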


\begin{defi}
\label{equivalent} Let $(f,U)$ be an attracting pair and assume
that $U'$ is an open subset of $X$. Then $(f,U')$ is equivalent to
$(f,U)$ if it is an attracting pair and defines the same
attracting set.
\end{defi}

Let $(f,U)$ be an attracting pair with associated
attracting set $A$. The set of connected components of a set $Y$
will be denoted by $\Pi_0(Y)$.\\

We begin by stating some general facts:
\begin{enumerate}
\item
The attracting set $A$ is closed, since $A=\cap f^n(U)=\cap
\overline{f^n(U)}$. Besides, $f(A)=A$.
\item
If $U'$ is a
neighborhood of $A$ such that $f^{ n+1}(U)\subset U'\subset f^{ n}(U)$ for some
positive integer $n$, then the pair $(f,U')$ is an attracting pair
equivalent to $(f,U)$. In particular, $(f,f(U))$ is equivalent to
$(f,U)$.
\item  Without loss of generality, one may suppose that every connected component of $U$ intersects $A$. Indeed,
one can remove unnecessary components and obtain an equivalent attracting pair.

\item
If $X$ is locally connected and the closure of $U$ is
compact, then $A$ has finitely many connected components.

Proof: The connected components of $U$ are open, intersect $A$ and
 are pairwise disjoint. Therefore, as $A$ is compact, $U$ has finitely many
connected components.  Besides, if $u \in \Pi_0 (U)$, then $f(u)$
intersects exactly one element of $\Pi_0 (U)$. Then,
$A=\cap_{n>0}f^n(U)$ has finitely many components (exactly as many
as $U$).

Finiteness of connected components of attractors was proved with a different set of hypothesis in {\cite[Theorem 1.4.6]{bue}}.

\end{enumerate}

\begin{lemma}
\label{covering} Let $X$ be a compact space and $(f,U')$ a normal
attracting pair in $X$ defining an attracting set $A$. If $A$ is
connected, then the restriction of $f$ to $A$ is a $d:1$ covering map onto $A$.
Moreover, there exists a neighborhood $U$ of $A$ such that
\begin{enumerate}
\item
\label{1}The pair $(f,U)$ is equivalent to $(f,U')$.
\item
\label{2}
The restriction of $f$ to $U$ is a $d:1$ covering
map onto $f(U)$.
\end{enumerate}
\end{lemma}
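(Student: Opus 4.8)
The plan is to establish the two assertions separately: first that $f|_A\colon A\to A$ is a $d:1$ covering, then the existence of the neighborhood $U$. I would rely throughout on the general facts collected above --- in particular $f(A)=A$, and, since $\overline{f(U')}\subseteq U'$, the inclusion $A\subseteq f(U')\subseteq U'$; combined with $S_f\cap U'=\emptyset$ (normality) this makes $f$ a local homeomorphism on the neighborhood $U'$ of $A$. The covering statement I would deduce from the standard fact that a proper local homeomorphism onto a locally compact Hausdorff space is a covering map: given $b$ in the base with preimages $x_1,\dots,x_k$, pick disjoint open $G_j\ni x_j$ on which the map is an open injection; the image of the complement of $\bigcup_jG_j$ is closed (a proper map into a locally compact Hausdorff space is closed) and misses $b$, so intersecting the $p(G_j)$ and deleting it produces a neighborhood of $b$ evenly covered by $k$ sheets. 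Applying this to $f|_A$ (onto $A$, a local homeomorphism, proper since $A$ is compact, with finite fibers since these are compact and discrete) gives a covering, and connectedness of $A$ fixes the number of sheets $d$.

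For the neighborhood I would use normality a second and essential time. Put $Z=f^{-1}(A)\setminus A$, closed by normality and disjoint from $A=f(A)$, and choose an open $P$ with $A\subseteq P\subseteq\overline P\subseteq U'\setminus Z$ (here I use that the compact $X$ is, as always in the applications, Hausdorff); then $f^{-1}(A)\cap\overline P=A$, so each $a\in A$ has $f^{-1}(a)\cap\overline P=\{a_1,\dots,a_d\}$ with all $a_j\in A$. Choose pairwise disjoint open sets $O^a_k\ni a_k$ with $\overline{O^a_k}\subseteq P$ and $f|_{O^a_k}$ a homeomorphism, and set $W_a=\bigcap_{k=1}^d f(O^a_k)\setminus f\bigl(\overline P\setminus\bigcup_kO^a_k\bigr)$, an open neighborhood of $a$. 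The point is that if $x\in\overline P$ and $f(x)\in W_a$ then $x\notin\overline P\setminus\bigcup_kO^a_k$, hence $x$ lies in exactly one $O^a_k$; thus each point of $W_a$ has exactly $d$ preimages in $P$, none on $\partial P$. Covering the compact set $A$ by finitely many $W_{a^{(1)}},\dots,W_{a^{(m)}}$ and letting $\mathcal W$ be their union, I get $f^{-1}(\mathcal W)\cap\partial P=\emptyset$, so for compact $K\subseteq\mathcal W$ the set $f^{-1}(K)\cap P=f^{-1}(K)\cap\overline P$ is compact; hence $f\colon f^{-1}(\mathcal W)\cap P\to\mathcal W$ is a proper local homeomorphism with all fibers of size $d$, so a $d:1$ covering by the fact above.

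To produce $U$ itself I would then restrict this covering to a forward-contracting base. Take $W'=f^N(U')$; by fact (2) above $(f,W')$ is an attracting pair equivalent to $(f,U')$, so $\overline{f(W')}\subseteq W'$, and since $\bigcap_Nf^N(U')=A$ one may fix $N$ with $\overline{W'}\subseteq P\cap\mathcal W$. Set $U=f^{-1}(W')\cap P$. Since $W'\subseteq\mathcal W$, $U$ is the preimage under the covering $f\colon f^{-1}(\mathcal W)\cap P\to\mathcal W$ of the open subset $W'$ of the base, so $f|_U\colon U\to f(U)=W'$ is again a $d:1$ covering. Moreover $A\subseteq U$; from $\overline{f(U)}=\overline{W'}\subseteq P$ and $f(\overline{W'})\subseteq\overline{f(W')}\subseteq W'$ one gets $\overline{f(U)}\subseteq U$, so $(f,U)$ is an attracting pair; and $f^n(U)=f^{N+n-1}(U')$ for $n\ge1$ yields $\bigcap_{n\ge0}f^n(U)=A$, so $(f,U)$ is equivalent to $(f,U')$.

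I expect the crux to be the second paragraph. Everything there hinges on the fact --- which is precisely the normality hypothesis --- that $f^{-1}(A)\setminus A$ is closed: this is what lets $A$ be separated from its other preimages by the neighborhood $P$, and without that separation extra preimages of points arbitrarily close to $A$ can persist in every neighborhood of $A$, so the ``covering over a neighborhood'' conclusion really does fail (as the examples in the final section are meant to show). A second, minor, point is that the neighborhood $\mathcal W$ carrying the covering structure is not forward contracting, so one cannot take it to be $U$ directly; passing to the trapping region $f^N(U')$ and invoking that the restriction of a covering over an open subset of the base is a covering repairs this.
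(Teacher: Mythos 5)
Your proof is correct and follows essentially the same route as the paper: use normality to separate $A$ from $f^{-1}(A)\setminus A$, build a neighborhood of $A$ on which every point has exactly $d$ preimages, and then intersect with the preimage of a deep forward image of $U'$ to get a trapping region (the paper takes $U=f^{-1}(f^{n+1}(U'))\cap V_0$, which is your $f^{-1}(W')\cap P$ up to relabeling). Yours simply fills in the details that the paper's two-sentence sketch leaves implicit.
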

This assertion is trivial when $f$ is injective in $U'$.
\begin{proof}
Note that the restriction of $f$ to $A$ is a local homeomorphism. As $A$ is compact and connected, the first assertion follows immediately.
%Normality implies that $f$ is a local homeomorphism in $U$ and that there exists a neighborhood $V\subset U$ of $A$ such that $f^{-1}(V)\cap A = A$. So, for all $x\in A$ there exists a neighborhood $W\subset V$ of $x$ such that $f|_W:W\to f(W)$ is a homeomorphism.  As $W\subset V$ one has $f(A\cap W)= A\cap f(W)$, proving the claim.

%It follows that the cardinality of the set $f^{-1}(x)\cap A$ (which is finite by compactness) is locally constant over $A$. Hence if $A$ is connected, this cardinality is constant as $x$ ranges over $A$. Hence there exists a positive integer $d$ such that $\#\{f^{-1}(x)\cap A\}=d$ for all $x\in A$.

%Take $x\in A$, and $y_1, \ldots, y_d \in f^{-1}(x)\cap A$.  For all $i\in \{1, \ldots, d\}$ take a neighborhood $U_i$ of $y_i$ in $A$ such that $f|_{U_i}: U_i\to f(U_i)$ is a homeomorphism and such that the $U_i$'s are pairwise disjoint. Let $V= \cap _{i=1}^d f(U_i)$.  Then, $V$ is a neighborhood of $x$ in $A$ and $f^{-1}(V)\cap A = V_1 \cup \ldots V_d$, where the $V_i$'s are pairwise disjoint and  $f|_{V_i}: V_i\to V$ is a homeomorphism.  This proves that $f|_A: A \to A$ is a $d:1$ covering map.

 By normality, there exists an open set $V\subset U$ such that $f^{-1}(A)\cap V =A$.  Moreover, there exists $V_0$, a neighborhood of $A$ contained in $V$, such that every point in
$V_0$ has exactly $d$ preimages in $V$. Let $n$ be such that
$f^n(U')$ is contained in $V_0$ and note that
$U=f^{-1}(f^{n+1}(U'))\cap V_0$ satisfies the assertions \ref{1} and
\ref{2}.
\end{proof}
We obtain the following corollary:
\begin{lemma}
\label{sufi}
If $X$ is compact, $A$ is connected and $S_f\cap U=\emptyset$, then the attracting pair $(f,U)$ is normal if and only if $f|_A$ is a covering onto $A$.
\end{lemma}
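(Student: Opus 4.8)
The plan is to treat the two implications separately. The implication ``$(f,U)$ normal $\Rightarrow$ $f|_A$ is a covering onto $A$'' is immediate: together with the hypothesis that $A$ is connected, it is exactly the first assertion of Lemma \ref{covering}, so nothing new is needed. The content is the converse. Assuming $f|_A\colon A\to A$ is a covering and $S_f\cap U=\emptyset$, I must show $(f,U)$ is normal; since the condition $S_f\cap U=\emptyset$ is already part of the hypothesis, this reduces to proving that $\tilde A:=f^{-1}(A)\setminus A$ is closed. Using the general fact $f(A)=A$, every point of $A$ lies in $f^{-1}(A)$, so $f^{-1}(A)=A\cup\tilde A$ with $A\cap\tilde A=\emptyset$; as $f^{-1}(A)$ is closed, it suffices to show $\overline{\tilde A}\cap A=\emptyset$.

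I would argue by contradiction. Suppose $x\in\overline{\tilde A}\cap A$. Since $x\in A\subseteq U$ and $S_f\cap U=\emptyset$, there is an open neighbourhood $W$ of $x$ on which $f$ restricts to a homeomorphism onto $f(W)$. On the other hand, the covering hypothesis provides an evenly covered neighbourhood $N$ of $f(x)$ in $A$, whose preimage in $A$ splits into $d$ sheets, each mapped homeomorphically onto $N$; let $S$ be the sheet containing $x$. Now shrink: pick an open $O\subseteq X$ with $O\cap A=S$, put $W'=W\cap O$, and note that $N':=f(W'\cap A)$ is an open neighbourhood of $f(x)$ in $A$, being the image of an open subset of the sheet $S$ under the homeomorphism $f|_S$. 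Extend $N'$ to an open $\hat N\subseteq X$ with $\hat N\cap A=N'$ and consider the open neighbourhood $W''=W'\cap f^{-1}(\hat N)$ of $x$. Because $x\in\overline{\tilde A}$, choose $z\in W''\cap\tilde A$; then $f(z)\in A\cap\hat N=N'=f(W'\cap A)$, so $f(z)=f(w)$ for some $w\in W'\cap A\subseteq W$. Since $f|_W$ is injective and $z\in W$, this forces $z=w\in A$, contradicting $z\in\tilde A$. Hence $\overline{\tilde A}\cap A=\emptyset$, $\tilde A$ is closed, and $(f,U)$ is normal.

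The main obstacle — and the only place where both hypotheses are genuinely combined — is the shrinking step: one has to reconcile the two local pictures, namely $f$ being a homeomorphism on a full $X$-neighbourhood $W$ of $x$ and $f$ being a homeomorphism of a sheet $S$ of the covering $f|_A$ onto a neighbourhood of $f(x)$ in $A$, so that a point of $\tilde A$ whose image lies close to $f(x)$ inside $A$ is pinned down to its partner in $A\cap W$. Once $W'$, $N'$ and $\hat N$ are arranged so that $f(W'\cap A)=N'$, the contradiction is forced by injectivity of $f|_W$ alone; the remainder is routine bookkeeping with open sets and uses only that $A$ is closed and $f$ continuous, so compactness of $X$ plays no role in this direction (it was already used, via Lemma \ref{covering}, in the other one).
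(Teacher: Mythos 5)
Your proof is correct. The forward implication is, as you say, just the first assertion of Lemma \ref{covering}, and your argument for the converse is sound: every step of the shrinking construction checks out, and the final contradiction ($z=w$ with $w\in A$ and $z\in\tilde A$) is forced exactly as you describe. The route is genuinely different from the paper's, though it combines the same two ingredients (local injectivity of $f$ at a point $x\in A$, coming from $S_f\cap U=\emptyset$, and the local product structure of the covering $f|_A$ near $f(x)$). The paper argues sequentially: it takes $x_n\in\tilde A$ with $x_n\to x\in A$, uses compactness of $A$ to extract convergent subsequences of the $d$ preimages $y_n^1,\dots,y_n^d$ of $f(x_n)$ in $A$, invokes a uniform separation constant $\epsilon$ between distinct preimages of a point of $A$, and concludes that $f(x)$ would have $d+1$ preimages in $A$, contradicting the degree. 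Your version is purely local and makes no use of compactness, of the finiteness of $d$, or of a uniform separation constant; it derives the contradiction from injectivity of $f$ on a single neighbourhood $W$ of $x$. This buys a somewhat more elementary and more general argument (it would work verbatim for coverings of any, not necessarily finite, degree over a merely closed $A$), and it also makes transparent a step that the paper leaves terse, namely why the limit $x$ must be distinct from each $y^i$ — in the paper this, too, ultimately rests on local injectivity of $f$ at $x$ rather than on the stated $\epsilon$-separation, which a priori only applies to pairs of points already known to be distinct. The paper's counting argument, in exchange, is shorter to write down.
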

\begin{proof}
To prove the remaining part, take a sequence $(x_n)_{n\in \N} \subset f^{-1}(A)\backslash A$
and assume by contradiction that $x_n \to x \in A$. For each $n\geq 0$, $f(x_n)\in A$ has
$d$ preimages in $A$, denoted $y_n^1, \ldots, y_n^d$.  By passing to subsequences, assume
that each $\{y_n^i\}_{n\in \N}$ is convergent to a point $y^i\in A$.
Note that there exists $\epsilon >0$ such that $d(x,y)\geq \epsilon$
whenever $x,y$ are different preimages of the same $z\in A$.
Moreover, $d(y^i, y^j) \geq\epsilon $ for $i\neq j$ and $d(x, y^i)\geq\epsilon$ for all $i=1, \ldots, d$.  It
follows that $f(x)$ has $d+1$ preimages in $A$: this contradiction proves the assertion.
\end{proof}

Let $M$ be a compact manifold. We begin the proof of proposition \ref{p1} with a
simple topological fact.

\begin{lemma}
\label{l1}  Let $A_1$ and $A_2$ be closed disjoint subsets of $M$. Assume that
$\{d_n\}_{n>0}$ is a sequence of connected, pairwise disjoint sets in $M$
such that the boundary of each $d_n$ is contained in $A_1\cup A_2$
and intersects $A_1$. Then there exists $N$ such that for all
$n>N$ the boundary of $d_n$ is disjoint from $A_2$.
\end{lemma}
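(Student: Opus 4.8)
The plan is to argue by contradiction using a compactness/accumulation argument on the sets $d_n$. Suppose that for infinitely many indices $n$ the boundary $\partial d_n$ meets both $A_1$ and $A_2$. Passing to this subsequence (and relabelling), we may assume $\partial d_n \cap A_1 \neq \emptyset$ and $\partial d_n \cap A_2 \neq \emptyset$ for every $n$. Pick $a_n \in \partial d_n \cap A_1$ and $b_n \in \partial d_n \cap A_2$; since $M$ is compact we may pass to a further subsequence so that $a_n \to a \in A_1$ and $b_n \to b \in A_2$, and since $A_1$ and $A_2$ are closed and disjoint we have $a \neq b$, in fact $\mathrm{dist}(A_1,A_2) =: 3\delta > 0$.

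Next I would exploit that the $d_n$ are connected and pairwise disjoint together with the local connectedness of the manifold $M$. Fix a point $x$ on a small arc or path inside a suitable $d_n$ joining a point near $a_n$ to a point near $b_n$: because $\partial d_n \subset A_1 \cup A_2$ and $d_n$ is connected with points within $\delta$ of both $A_1$ and $A_2$, the connected set $d_n$ must contain points of the "middle band" $W = \{x \in M : \delta \le \mathrm{dist}(x,A_1) \le 2\delta\}$ — otherwise $d_n$ would be split by $W$ into pieces each entirely within $\delta$ of one $A_i$, contradicting connectedness (here one uses that $\overline{d_n}\setminus d_n = \partial d_n$ lies in $A_1\cup A_2$, so the closure of $d_n$ cannot "escape" through $W$ without $d_n$ itself meeting $W$). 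Thus for each such $n$ we obtain a point $c_n \in d_n \cap W$.

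Now $W$ is compact (closed subset of compact $M$), so after passing to yet another subsequence $c_n \to c \in W$; in particular $c \notin A_1 \cup A_2$ since $\mathrm{dist}(c,A_1) \ge \delta$ and $\mathrm{dist}(c,A_2) \ge \mathrm{dist}(A_1,A_2) - 2\delta = \delta$. The point $c$ therefore lies in the open set $M \setminus (A_1 \cup A_2)$; choose a connected open neighborhood $N$ of $c$ with $\overline{N} \cap (A_1 \cup A_2) = \emptyset$. For all large $n$ the point $c_n$ lies in $N$; I claim then $d_n \subset N$ for all such $n$. Indeed if $d_n$ met $M \setminus N$, connectedness of $d_n$ would force $d_n$ to meet $\partial N$, but then $d_n$ would contain points of the nonempty set $d_n \cap \partial N$ while also, being connected and having boundary in $A_1\cup A_2$, would have to reach $A_1$ — this is consistent, so the cleaner route is: $d_n$ connected with $c_n \in N$; if $d_n \not\subset N$ then $d_n \cap \partial N \neq \emptyset$, and since $\partial N$ is a fixed compact set disjoint from $A_1 \cup A_2$, the set $\overline{d_n} = d_n \cup \partial d_n$ meets $\partial N$ at a point of $d_n$ itself. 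This does not yet contradict anything, so instead I would push the accumulation: infinitely many pairwise disjoint $d_n$ each containing a point $c_n$ with $c_n \to c$ forces, using local connectedness, that a fixed small disk around $c$ contains points of infinitely many distinct $d_n$ and is met by each in a set whose diameter cannot shrink (because each such $d_n$ reaches $A_1$, distance $\ge \delta$ away); so each of infinitely many pairwise disjoint connected sets crosses a fixed compact annular region around $c$, and one extracts a contradiction with the fact that in a surface (or via a simple covering/Baire argument in a manifold) only finitely many pairwise disjoint connected sets can cross a fixed compact set while each having diameter bounded below.

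The main obstacle, and the step requiring the most care, is the last one: turning "infinitely many pairwise disjoint connected sets accumulate at $c$ while each having diameter bounded below by $\delta$" into an outright contradiction. The clean way is to observe that, since the $d_n$ are pairwise disjoint and each contains a point within $\varepsilon$ of $c$ for every $\varepsilon>0$ (along the subsequence) yet also a point at distance $\ge \delta$ from $c$, each $d_n$ meets every sufficiently small sphere $\{\mathrm{dist}(\cdot,c) = r\}$ around $c$; fixing one such $r$ with the sphere a nice compact set, we get infinitely many pairwise disjoint connected sets each meeting a fixed compact set and each of diameter $\ge r$, and then a standard argument (the sets $d_n$ being disjoint, their accumulation point $c$ cannot lie in any $d_n$, so $c \in \overline{d_n}\setminus d_n = \partial d_n \subset A_1\cup A_2$ for infinitely many $n$ — contradicting $c \notin A_1 \cup A_2$) finishes the proof. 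I expect the bookkeeping of the successive subsequences and the verification that $c$ is genuinely a boundary point of infinitely many $d_n$ to be the delicate part; everything else is routine compactness.
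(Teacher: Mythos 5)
Your setup (reduction to a subsequence whose boundaries meet both $A_i$, the middle band $W$, and the extraction of $c_n\in d_n\cap W$ with $c_n\to c\notin A_1\cup A_2$) is sound, but the final step is a non sequitur, and it is exactly the step that carries the whole proof. From $c_n\in d_n$ and $c_n\to c$ you may conclude that $c$ lies in the closure of $\bigcup_n d_n$, but \emph{not} that $c\in\overline{d_n}$ for infinitely many $n$ (or even for one $n$): each $c_n$ sits at a fixed positive distance from $c$, and nothing forces any single $d_n$ to accumulate at $c$. So the conclusion ``$c\in\overline{d_n}\setminus d_n=\partial d_n\subset A_1\cup A_2$ for infinitely many $n$'' is unjustified. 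Your fallback claim is also false as stated: infinitely many pairwise disjoint connected open sets can each meet a fixed compact set while having diameter bounded below (think of nested thin annuli or parallel strips), so ``diameter $\geq r$ plus crossing a fixed sphere'' cannot by itself yield a contradiction. The hypothesis $\partial d_n\subset A_1\cup A_2$ must enter the endgame, and in your write-up it never does after the construction of $W$.

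There are two ways to close the gap. The paper avoids accumulation points entirely: since the $d_n$ are pairwise disjoint open sets in a compact manifold, all but finitely many contain no ball of radius $\epsilon$ (with $2\epsilon<\operatorname{dist}(A_1,A_2)$); for such $d_n$ every point is within $\epsilon$ of $\partial d_n\subset A_1\cup A_2$, so $d_n$ is the disjoint union of the two open sets $a^i_n=\{x\in d_n: B(x,\epsilon)\cap A_i\neq\emptyset\}$, and connectedness with $a^1_n\neq\emptyset$ forces $a^2_n=\emptyset$. Alternatively, your approach can be repaired at the last step: take a small ball $B=B(c,\rho)$ with $\overline B\cap(A_1\cup A_2)=\emptyset$ and, for $n$ large, let $e_n$ be the connected component of $d_n\cap B$ containing $c_n$. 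Because $\partial d_n\subset A_1\cup A_2$ is disjoint from $\overline B$, the set $e_n$ is both open and closed in $B$, hence $e_n=B$; thus infinitely many pairwise disjoint $d_n$ would all contain $B$, a contradiction. Either route works, but as written your argument does not reach a contradiction.
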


\begin{proof}
Let $\epsilon_0>0$ be such that for every $x\in M$, the
exponential map of $M$ is a diffeomorphism from the ball centered at $0$
and of radius $\epsilon_0$. Let $\epsilon<\epsilon_0$ be such that
the distance between $A_1$ and $A_2$ is greater than $2\epsilon$.
As $\{d_n\}$ is a disjoint sequence of open sets and $M$ is
compact, there exists $N>0$ such that $d_n$ does not contain a
ball of radius $\epsilon$ for every $n>N$. Take any $n>N$ and for
$i=1,2$ define $a^{i}_n=\{x\in d_n\ :\ B_M(x;\epsilon)\cap
A_i\neq\emptyset\}$, where $B_M(x;\epsilon)$ is the ball in $M$
image under the exponential of the corresponding ball in $T_xM$.
Note that $d_n=a^{1}_n\cup a^{2}_n$, that $a^{1}_n$ and $a^{2}_n$
are open and disjoint in $d_n$ and that $a^{1}_n$ is not empty. It
follows that $a^{2}_n$ is empty. As $n>N$ was arbitrary, the lemma
is proved.
\end{proof}

From now on it is assumed that $(f, U)$ is a normal attracting pair defined in a compact manifold $M$, where $U$ satisfies the
conditions of Lemma \ref{covering}.

Note that $f$ acts on $\Pi_0 (U\setminus A)$. More precisely,

\begin{enumerate}\item If $d\in \Pi_0 (U\setminus A)$, then $f(d)$
is contained in some $d'\in \Pi_0 (U\setminus A)$ because
$f^{-1}(A)\cap U = A$. Define $F: \Pi_0
(U\setminus A)\to  \Pi_0 (U\setminus A)$ by $F(d)=d'$.
\item\label{sobre} $F$ is surjective: if $d' \in \Pi_0
(U\setminus A)$ then $d'\cap f(U)\neq\emptyset$, because $f(U)$ is
a neighborhood of $A$ and the boundary of $d$ intersects $A$. It
follows that there exists $x\in U\setminus A$ such that $f(x)\in
d^{'}$, but every $x\in U\setminus A$ belongs to some $d\in \Pi_0
(U\setminus A)$, showing that $F(d)=d'$.
\end{enumerate}

\begin{lemma}
\label{l2} No component of
$M\setminus A$ is contained in $U$.
\end{lemma}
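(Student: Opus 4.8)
Suppose for contradiction that some component $c_0$ of $M\setminus A$ is contained in $U$. The idea is to run the backward dynamics $F$ (together with the fact that, by Lemma~\ref{covering}, $f$ is a $d:1$ covering of $U$ onto $f(U)$, hence open) and derive a contradiction with the compactness-type statement of Lemma~\ref{l1}. First I would observe that because $c_0\subset U$ and $f^{-1}(A)\cap U=A$, the component $c_0$ is a component of $U\setminus A$, so it lives in the domain of $F$; moreover, since $U$ is a $d:1$ covering onto $f(U)$ and $c_0$ is open with $\partial c_0\subset A$ (as a component of $M\setminus A$), the image $f(c_0)$ is open with boundary in $A$, so $f(c_0)$ is again contained in a single component of $M\setminus A$ that meets $A$ — and a component of $M\setminus A$ that is \emph{not} contained in $U$ cannot be hit, because $f(c_0)\subset f(U)\subset U$. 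Thus the forward orbit $c_0, f(c_0),\dots$ stays among components of $M\setminus A$ contained in $U$, i.e.\ among components of $U\setminus A$ whose closure meets $A$ on \emph{all} of their boundary.

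Now I would go backwards. Using that $F:\Pi_0(U\setminus A)\to\Pi_0(U\setminus A)$ is surjective (item~\ref{sobre} above), pick a backward $F$-orbit $\{c_n\}_{n\ge 0}$ with $F(c_{n+1})=c_n$. The key point is that these $c_n$ are pairwise disjoint connected open sets: if two of them coincided we would get a periodic component, but a periodic component $c$ would satisfy $f^k(c)\subset c$ with $\overline{f^k(c)}\subset U$ contained in a smaller and smaller region — more precisely, iterating $f$ contracts $U$ into $A$ (since $A=\bigcap f^n(U)$), so $\bigcap_k f^k(c)\subset A$, forcing $c$ to be eventually squeezed out of itself, a contradiction. (Alternatively, and more simply: since each $c_n$ is contained in $U$ and $f^{-n}(c_0)\supset c_n$ while $f^n(U)\to A$, periodicity of a component not meeting $A$ is impossible.) Having an infinite disjoint sequence $\{c_n\}$ of connected sets with $\partial c_n\subset A$, I would then split $A$ into two disjoint closed pieces $A_1, A_2$ — here is where connectedness of $A$ could be an issue, but recall from item~(4) of the general facts (or Lemma~\ref{covering}'s setup) that $A$ has finitely many components and, if $A$ is connected, $M\setminus A$ being an annulus-type situation still lets one separate a boundary piece; the honest move is: since $\partial c_n$ is a \emph{nonempty proper} closed subset of the boundary structure, pick any $x\in A$ and a small ball, set $A_1 = A\cap \overline{B}$, $A_2 = A\setminus B'$ for slightly smaller $B'\subset B$, which are disjoint and closed, and arrange that $\partial c_n$ meets $A_1$ for infinitely many $n$ (by pigeonhole, since the $c_n$ accumulate somewhere on $A$). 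Applying Lemma~\ref{l1} to this subsequence gives that eventually $\partial c_n$ is disjoint from $A_2$, i.e.\ $\partial c_n\subset A_1$, so $\partial c_n$ is contained in an arbitrarily small ball; but a connected open set whose boundary lies in a tiny ball is either tiny or contains the complement of that ball, and neither is compatible with having infinitely many disjoint such sets in the compact manifold $M$ — contradiction.

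**Main obstacle.** The delicate step is controlling the topology of the $c_n$ once we know $\partial c_n$ shrinks: a priori $c_n$ could be the \emph{large} complementary region rather than a small disk. I expect the clean way around this is to note that all but at most finitely many $c_n$ must be the ``inside'' small region, since the $c_n$ are pairwise disjoint and $M$ is compact (at most one of them can contain a fixed ball), so for large $n$, $c_n$ has diameter tending to $0$; combined with $\partial c_n\subset A_1$ inside a small ball this is the desired contradiction. A secondary subtlety is justifying the non-periodicity of components and the claim that $f(c_0)$ cannot land in a component outside $U$ — both follow from $f^{-1}(A)\cap U=A$ together with $f(U)$ being a neighborhood of $A$, exactly as in the proof that $F$ is well-defined and surjective.
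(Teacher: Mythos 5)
There is a genuine gap, and it sits exactly where the difficulty of the lemma lies. Your backward orbit $\{c_n\}$ consists of components of $U\setminus A$, not of $M\setminus A$; their boundaries are only contained in $A\cup\partial U$. The assertion that you have ``an infinite disjoint sequence $\{c_n\}$ of connected sets with $\partial c_n\subset A$'' is unjustified and in general false: the component of $M\setminus A$ containing $c_n$ may extend outside $U$, in which case $\partial c_n$ meets $\partial U$. The paper's proof turns precisely on this point. Since $c_0\cap A=\emptyset$, some point of $c_0$ eventually has no $f^{-N}$-preimage in $U$, so there is a first backward time $N_0$ at which $d_{-N_0}$ is \emph{strictly} contained in its $M\setminus A$-component; the same then holds for all $n\geq N_0$, so each $d_{-n}$ has boundary meeting \emph{both} $A$ and $\partial U$. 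Applying Lemma \ref{l1} with $A_1=A$ and $A_2=\partial U$ (two disjoint closed sets) to this disjoint sequence gives the contradiction. In other words, the correct argument shows that the backward components must eventually bridge from $A$ to $\partial U$, which Lemma \ref{l1} forbids for an infinite disjoint family --- the opposite of what you assumed. Note also that if your claim $\partial c_n\subset A$ did hold for all $n$, each $c_n$ would be a full component of $M\setminus A$ inside $U$, and since $f$ maps such a component \emph{onto} a full component (it is open with boundary sent into $A$), one would get $c_0=f^n(c_n)\subset f^n(U)$ for all $n$, hence $c_0\subset A$ immediately; no ball-splitting would be needed. This is the same mechanism as your non-periodicity remark, which for the same reason should be phrased via $f^k(c)=c$ for a full component rather than $f^k(c)\subset c$.

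A second, independent problem is that your endgame does not produce a contradiction even granting $\partial c_n\subset A$. After Lemma \ref{l1} forces $\partial c_n$ into a small ball and disjointness forces the diameters of the $c_n$ to tend to zero, you are left with infinitely many pairwise disjoint tiny connected open sets accumulating on $A$ --- which is perfectly possible in a compact manifold (infinitely many disjoint small disks, say). Nothing available at this stage rules that out: the finiteness of $\Pi_0(M\setminus A)$ is Proposition \ref{p1}, which is proved \emph{after}, and using, this lemma, so it cannot be invoked here.
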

\begin{proof}
Assume by contradiction that $c\in \Pi_0(M\setminus A)$  is
contained in $U$. It follows that $f^n(c)$ is a component of
$M\setminus A$ for every $n>0$ because $f$ is an open mapping in $U$ and
$f^{-1}(A)\cap U = A$. This implies that $f^n(c)\in
\Pi_0(U\setminus A)$ for every $n\geq 0$. Note that $c$ is not
$F$-periodic, otherwise $c$ would be contained in $\cap_{n>0} f^n(U)$
which is a contradiction. Let $\{d_n\}_{n\in\Z}$ be a whole orbit
of $F$ such that $d_0=c$ (there exists such an orbit because $F$ is surjective).
 Moreover, as $c\cap A=\emptyset$, then there
exists $x\in c$ and $N\in \N$ such that $f^{-N}(x)\cap
U=\emptyset$; it follows that there exists a minimum $N_0>0$ such
that $d_{-N_{0}}$ is strictly contained in the component
$c_{-N_{0}}$ of $M\setminus A$ that contains $d_{-N_{0}}$.
Therefore, $d_{-n}$ is strictly contained in the component
$c_{-n}$ of $M\setminus A$ that contains $d_{-n}$, for every
$n\geq N_0$. So, the sequence $\{d_{n}\}_{-n>N_{0}}$ satisfies
the following properties:\\
It is a sequence of connected pairwise disjoint subsets of $M$,\\
the boundary of each $d_n$ is contained in $A\cup \partial U$,\\
the boundary of each $d_n$ intersects $A$, and\\
the boundary of each $d_n$ intersects $\partial U$ (because the
contrary assumption implies that the boundary of $d_n$ is
contained in $A$ and so $d_n$ is equal to a component of the
complement of $A$ in $M$).

As $A$ and $\partial U$ are closed disjoint sets, Lemma \ref{l1} implies the assertion.
\end{proof}

{\bf Proof of Proposition \ref{p1}.} The boundary of each
$d\in\Pi_0 (U\setminus A)$ intersects $\partial U$ by the previous lemma,
and it certainly intersects $A$.

By Lemma \ref{l1} it follows that $\Pi_0 (U\setminus A)$ is finite.
In particular, as each connected component of $M\backslash A$ intersects $U$, $M\backslash A$ has finitely many
connected components.\\

\end{section}

\begin{section}{Proof of Theorems.}

We devote this section to proving Theorem 1. We motivate the preliminary work with the following example. Suppose that $A$ is an attracting fixed point for a differentiable function $f$. We may take $U = B(A,r)$ for a suitable $r$ and obtain $\overline{f(U)}\subset U$.  In this case, $U$ retracts onto $f(U)$.  However, one may take a point $y\in U\backslash \overline{f(U)}$ and consider $U' = U\backslash \{y\}$ .  In this case $\overline{f(U')}\subset U'$, but  $U'$ does not retract onto $f(U')$. A fundamental part of the proof of Theorem 1 is finding a neighborhood $U$ of the attracting set such that  $U$ retracts onto $f(U)$ (see Lemma \ref{simple}).

\subsection{Genus and Nexus of a continuum}
Throughout this section, $M$ will stand for a compact oriented  connected surface without boundary.
A two dimensional submanifold with boundary $S$ is always diffeomorphic to the space obtained
by attaching a finite number $(g(S))$ of handles to a two sphere and then
removing a finite number $(\kappa(S))$ of open discs with disjoint closures and smooth boundaries.
The Euler characteristic of $S$, $\chi(S)=2-2g(S)-\kappa(S)$ is an invariant under homeomorphisms. If $f:S_1\to S_2$ is a covering map of degree $d$, then
$\chi(S_1)=d\chi(S_2)$.

The {\em genus} of $S$ is defined as the number of handles attached, and is denoted by $g(S)$. Equivalently, the genus of $S$ is
equal to the maximal number of simple closed disjoint curves one can delete from $S$ without
disconnecting it. In this form, the definition can be extended to the class of all connected
open subsets of $M$. Furthermore, the genus of a connected compact subset $K$ of $M$ is now defined
as the minimal genus of an open connected set that contains $K$. This definition
makes sense because whenever $W$ and $V$ are
open sets such that $V\subset W$, then $g(V)\leq g(W)$.
We denote by $\mathcal G(K)$ the class of all open neighborhoods of $K$ having minimal genus and whose closure is a submanifold of $M$.

%\begin{rk}\label{submanifold}
%Given $K$ there exists a sequence $\{S_n \}$ of submanifold in $M$ such that $S_{n+1}\subset S_n$, $\cap S_n=K$ whit $g(S_n)$ constant.
%\end{rk}

The set $\mathcal G(K)$ can be characterized
as follows:
\begin{lemma}
\label{characterization}
If $S$ is an open neighborhood of $K$ whose closure is a submanifold with boundary, then $S\in \mathcal G(K)$ if and only if each simple closed curve in $S$
disconnects $S$ or intersects $K$.
\end{lemma}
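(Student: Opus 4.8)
The plan is to prove both implications in contrapositive form, resting on one elementary observation: if $\gamma$ is a simple closed curve in a connected open set $W\subseteq M$ that does not disconnect $W$, then $g(W\setminus\gamma)\le g(W)-1$, since adjoining $\gamma$ to a maximal family of disjoint non-disconnecting simple closed curves of $W\setminus\gamma$ produces a strictly larger such family of $W$. From this the implication ``$S\in\mathcal G(K)$ implies that every simple closed curve in $S$ disconnects $S$ or meets $K$'' is immediate: if $\gamma\subseteq S$ is a simple closed curve disjoint from $K$ that does not disconnect $S$, then $S\setminus\gamma$ is a connected open neighbourhood of $K$, so $g(K)\le g(S\setminus\gamma)\le g(S)-1<g(S)$; as $g(S)\ge g(K)$ always and $\overline S$ is a submanifold by hypothesis, this forces $S\notin\mathcal G(K)$.

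The substantial direction is the converse. Assume $S\notin\mathcal G(K)$, so that $g(S)>g(K)$; I must produce a simple closed curve in $S$ that does not disconnect $S$ and is disjoint from $K$. First I pass to a concrete model. Intersecting a connected open neighbourhood of $K$ of genus $g(K)$ with $S$ and taking the component that contains $K$ gives a connected open $W$ with $K\subseteq W\subseteq S$ and $g(W)<g(S)$; inside $W$ I choose a compact connected subsurface $V$ with smooth boundary and $K\subseteq\mathrm{int}\,V$, so that $g(V)<g(S)$. Set $T=\overline S\setminus\mathrm{int}\,V$, a nonempty compact subsurface with $m\ge1$ components, and let $b\ge1$ be the number of circles in $\partial V$, along which $V$ and $T$ are glued to form $\overline S$. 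An Euler characteristic count on this decomposition, applying $\chi=2-2g-\kappa$ to each piece together with additivity along circles, yields $g(S)=g(V)+g(T)+b-m$, and hence $g(T)+b>m$.

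Now I distinguish two cases. If $g(T)\ge1$, then some component $T_j$ carries a non-disconnecting simple closed curve $\gamma$ in its interior; since every component of $T$ is attached to the connected set $V$ and $T_j\setminus\gamma$ is still connected, $\overline S\setminus\gamma$ is connected, and $\gamma$ is disjoint from $\overline V\supseteq K$. If $g(T)=0$, then $b\ge m+1$, so some component $T_j$ meets $\partial V$ in at least two circles $C_1,C_2$; then $C_1$ is a simple closed curve in $S$ disjoint from $K$, and $\overline S\setminus C_1$ is connected because $V$ and $T_j$ each remain connected and stay joined to one another along $C_2$ while the remaining components of $T$ are still attached to $V$. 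In either case the required curve has been found, and the lemma follows.

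I expect the main obstacle to lie entirely in this converse: setting up the cut-and-paste model cleanly (a connected $V$ with $\partial V\subset S$), carrying out the Euler characteristic bookkeeping when $T$ may be disconnected, and — the one genuinely geometric point — recognizing that the extra genus appearing when $b>m$ is detected not by a handle inside $T$ but by a boundary circle of $V$ itself, which is exactly what lets the witnessing curve be kept off $K$ in that case.
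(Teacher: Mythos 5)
Your proof is correct, and its overall skeleton matches the paper's: both directions are argued in contrapositive, the forward implication by deleting the offending curve to drop the genus, and the converse by nesting a minimal-genus neighbourhood of $K$ inside $S$ and locating a non-disconnecting curve in the complementary region. The difference is in the converse: the paper simply asserts that, since $g(S)>g(S')$ for some $S'\in\mathcal G(K)$ contained in $S$, ``there exists a simple closed curve in $S\setminus S'$ that does not disconnect $S$ and does not intersect $K$,'' with no justification. You actually prove this via the decomposition $\overline S=V\cup T$ and the Euler characteristic identity $g(S)=g(V)+g(T)+b-m$, and your case split isolates the genuinely non-obvious point: when $T$ carries no genus, the witnessing curve is not a handle curve of $T$ but a boundary circle of $V$ that is redundant for connectivity (possible exactly because $b>m$), and such a circle automatically avoids $K$. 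So your argument buys a complete justification of the step the paper leaves implicit, at the cost of some bookkeeping; the only (harmless, and shared with the paper) loose end is the tacit identification of $S$ with the manifold interior of $\overline S$ when transferring connectivity and genus statements between the open set and the compact subsurfaces.
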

\begin{proof}
Let $\gamma$ be a closed simple curve that does not disconnect $S$ neither intersects $K$. Then $S'=S\setminus \{\gamma\}$ is connected, is a neighborhood of $K$ and
has $g(S')=g(S)-1$; thus $S\notin \mathcal G(K)$.\\
If $S\notin \mathcal G(K)$, there exists a compact subset of $S$ whose interior $S'$ belongs to $\mathcal G(K)$. As the genus of $S$ is greater than the genus of $S'$, there
exists a simple closed curve $\gamma$ in $S\setminus S'$ that does not disconnect $S$, and does not intersect $K$.
\end{proof}

The {\em nexus} of a submanifold $S$ is defined as the number of connected components of its boundary. It can also be defined in terms of crosscuts. A crosscut is an arc in $S$
with extreme points in the boundary of $S$. The nexus of $S$ is $\kappa$ if $\kappa-1$ is the minimal number of disjoint crosscuts needed to connect the boundary of $S$.
We are interested in extending this definition to a continuum $K\subset M$.\\

\begin{lemma}
\label{biendef}
The number of connected components of $S\setminus K$ is the same for every $S\in\mathcal G(K)$.
\end{lemma}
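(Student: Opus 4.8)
The plan is to reduce the statement to the case of two \emph{nested} neighbourhoods and then to track, by an elementary combinatorial device, how the complementary components change under shrinking; the hypothesis that the genus is minimal is used exactly once, through Lemma \ref{characterization}.

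First I would reduce to nested pairs. Given $S_1,S_2\in\mathcal G(K)$, pick a smooth $\rho:M\to[0,1]$ equal to $1$ on $K$ and to $0$ outside $S_1\cap S_2$, a regular value $t\in(0,1)$, and let $S_3$ be the connected component of $\{\rho>t\}$ containing $K$. Then $\overline{S_3}\subset S_1\cap S_2$ is a submanifold, $S_3$ is a connected neighbourhood of $K$, so $g(K)\le g(S_3)\le g(S_1)=g(K)$ and hence $S_3\in\mathcal G(K)$. It therefore suffices to prove the following: if $S',S\in\mathcal G(K)$ and $\overline{S'}$ is a compact submanifold contained in the open surface $S$, then $S'\setminus K$ and $S\setminus K$ have the same number of connected components.

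For this, let $\partial S'$ denote the finitely many boundary circles of $\overline{S'}$; each lies in $S$ and is disjoint from $K$. Decompose $S\setminus K=(S'\setminus K)\sqcup\partial S'\sqcup(S\setminus\overline{S'})$, where $S\setminus\overline{S'}$ has finitely many components, being the interior of the compact surface $\overline S\setminus S'$. A thin collar of a circle $\gamma\subset\partial S'$ meets $S'\setminus K$ in exactly one component and $S\setminus\overline{S'}$ in exactly one component, so we may form the bipartite graph $\mathcal H$ whose left vertices are the components of $S'\setminus K$, whose right vertices are the components of $S\setminus\overline{S'}$, and which has one edge per circle of $\partial S'$, joining the two components it borders. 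Since all the pieces are open apart from the gluing circles, the components of $S\setminus K$ are in bijection with those of $\mathcal H$, and we are reduced to showing that every connected component of $\mathcal H$ carries exactly one left vertex.

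The heart of the matter is the claim that every right vertex has degree one, i.e. each component $E$ of $S\setminus\overline{S'}$ has exactly one of its boundary circles in $\partial S'$. It has at least one, since $S$ is connected and $E\neq S$, so the frontier of $E$ in $S$ is a nonempty subset of $\partial S'$ which, being reached through a collar, contains a full circle. If it had two, say distinct circles $\gamma\neq\gamma'$ of $\partial S'$ lying in $\overline E$, then $\gamma$ is a simple closed curve in $S$ disjoint from $K$, so by Lemma \ref{characterization} it must disconnect $S$, say $S\setminus\gamma=P\sqcup Q$; as $S'$ and $E$ are connected, disjoint from $\gamma$, and abut $\gamma$ from opposite sides, one of them lies in $P$ and the other in $Q$, say $S'\subset P$ and $E\subset Q$; but then $\gamma'\subset\overline{S'}\subset P\cup\gamma$ and $\gamma'\subset\overline E\subset Q\cup\gamma$, forcing $\gamma'\subset P\cap Q=\emptyset$, which is absurd. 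Consequently no right vertex is isolated, so every component of $\mathcal H$ contains a left vertex; and no component contains two left vertices, since a path between them would pass through a right vertex of degree at least two. Hence each component of $\mathcal H$ has exactly one left vertex, whence $\#\Pi_0(S\setminus K)=\#\Pi_0(\mathcal H)=\#\Pi_0(S'\setminus K)$, completing the nested case and the lemma. The only genuinely costly step is the ``degree $\le 1$'' claim, and it is precisely there that minimality of the genus is indispensable: without it a boundary circle of $S'$ could be nonseparating in $S$ with a second boundary circle of $S'$ on its far side, thereby merging two complementary components of $S'$ inside $S$.
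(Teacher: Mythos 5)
Your proof is correct and follows essentially the same strategy as the paper's: pass to a common refinement $S_3\subset S_1\cap S_2$ to reduce to nested neighbourhoods, and then use Lemma \ref{characterization} applied to a boundary circle of the inner neighbourhood to show that two components of $S'\setminus K$ cannot merge inside $S\setminus K$. Your bipartite-graph bookkeeping and the ``each component of $S\setminus\overline{S'}$ meets exactly one circle of $\partial S'$'' claim are a cleaner and more explicit organization of the same separation argument that the paper runs by contradiction.
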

\begin{proof}
Take $S_1$ and $S_2$ in $\mathcal G(K)$. The intersection of $S_1$
and $S_2$ is a neighborhood of $K$, but not necessarily a
submanifold, not necessarily connected. There exists $S'\in\mathcal G(K)$ such that
$S'$ is contained in the interior $ S_1\cap S_2$. If the number of components
of $S_1\setminus K$ is different to the number of components of
$S_2\setminus K$, then the number of one of them is different to
the number of components of $S'\setminus K$. So, we can assume
that $S^{'}$  is contained in the interior $ S_1$, and the number of connected
components of $S_1\setminus K$ is strictly smaller than the
number of connected components of $S^{'}\setminus K$ . So each
connected component of $S^{'}\setminus K$ is contained in a connected
component of $S_1\setminus K$. By assumption, there must
exists a connected component $c$ of $S_1\setminus K$ that contain at least two  connected component $d_1,d_2$ of $S^{'}\setminus K$.
 The boundary of $d_1$ contains a simple closed curve
$\gamma$ contained in $S_1$
that does not intersect $K$ and $S_0=S_1\setminus \gamma$
is connected. By Lemma \ref{characterization}, we arrive to a contradiction.\\
%If $d$ is not disconnected by $\gamma$, then the proof is obvious.
% If not, $d\setminus \gamma$ has two components.
%One of them is $d_2'$, and the other contains $d_2$, because the
%contrary assumption implies $d_2\cap d'_2\neq\emptyset$. It
%follows that both components of $d\setminus \gamma$ can be
%connected with $K$.
\end{proof}

\begin{defi}
\label{kappa}
Let $K$ be a compact and connected subset of $M$. Define the nexus of $K$ as the number of components of $S\setminus K$ where $S\in \mathcal G(K)$.
Denote by $\kappa(K)$ the nexus of $K$ .
\end{defi}

Now we will relate $\kappa(K)$ with $\kappa(S)$ for $S\in \mathcal G(K)$. Let us consider the class $\mathcal G_{\kappa} (K)$ of surfaces $S\in\mathcal G(K)$
having exactly one connected component of $\partial S$ in each connected component of $S\backslash K$.
It is clear that if $S\in \mathcal G_\kappa(K)$, then $\kappa (S) = \kappa (K)$.

We will often make use of the following

\begin{rk}\label{annuli}  If $S\in \mathcal G_{\kappa} (K) $, then $S\backslash K$ is a disjoint union of semi-open annuli.
\end{rk}
Indeed, $S\backslash K$ has genus zero as $S\in \mathcal G (K) $. Furthermore, $\kappa (S) = \kappa (K)$ gives us that each connected component of $S\backslash K$ has only two ends, therefore it is an annulus.

\begin{lemma}
\label{base}
If $K$ is a continuum such that $M\setminus K$ has finitely many components, then $\mathcal G_{\kappa} (K) $ is a basis of neighborhoods of $K$.
\end{lemma}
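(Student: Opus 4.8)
The plan is to show that every open neighbourhood $W$ of $K$ contains a member of $\mathcal{G}_\kappa(K)$, obtained by three successive simplifications of an arbitrary smooth neighbourhood.

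First I would produce a \emph{connected} compact submanifold-with-boundary neighbourhood $N_0$ of $K$ with $\overline{N_0}\subset W$: choose a smooth $\phi:M\to[0,1]$ vanishing on $K$ and equal to $1$ off $W$, a regular value $\epsilon\in(0,1)$, and let $N_0$ be the component of $\phi^{-1}([0,\epsilon))$ containing $K$ (unique since $K$ is connected). Then $\overline{N_0}$ is a submanifold with boundary in $\phi^{-1}(\epsilon)$, and $K\subset N_0\subset\overline{N_0}\subset W$. Next I reduce the genus: as long as the current neighbourhood $N$ is not in $\mathcal{G}(K)$, Lemma \ref{characterization} furnishes a simple closed curve $\gamma\subset N\setminus K$ that does not disconnect $N$; removing from $N$ a thin closed annular neighbourhood of $\gamma$ chosen disjoint from $K$ keeps it a connected submanifold neighbourhood of $K$ inside $W$ and lowers its genus by one. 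Since the genus is a nonnegative integer this terminates after finitely many steps at some $S_0\in\mathcal{G}(K)$ with $K\subset S_0\subset W$.

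The heart of the argument is the third step, repairing the nexus. Here the hypothesis enters: each component of $S_0\setminus K$ lies in one of the finitely many components of $M\setminus K$, and inside each of those it can meet only finitely many components of $S_0$ (a point-picking/compactness argument on the compact set $\partial S_0$, exactly of the type used in Lemma \ref{l1}); hence $S_0\setminus K$ has finitely many components $c_1,\dots,c_k$. For each $c_j$ let $n_j$ be the number of boundary circles of $S_0$ that border $c_j$; then $S_0\in\mathcal{G}_\kappa(K)$ precisely when every $n_j=1$. If $n_j=0$ (so $c_j$ is a complementary domain of $K$ lying entirely in $S_0$), I delete from $c_j$ a small closed disc: genus and number of complementary components are unchanged, and now $n_j=1$. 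If $n_j\ge 2$, I pick an embedded arc $\beta\subset\overline{c_j}$ with its two endpoints on two distinct such boundary circles, its interior in $c_j$, and $\beta$ disjoint from $K$, and delete a thin closed band around $\beta$; cutting a surface along an arc joining two distinct boundary components does not disconnect it and raises the Euler characteristic by one while lowering the number of boundary components by one, hence preserves the genus, so the result is again a connected submanifold neighbourhood of $K$ inside $W$ of minimal genus, with the two circles merged into one so that $n_j$ drops by one. The quantity $\sum_j|n_j-1|$ strictly decreases at each step, so after finitely many operations I reach $S\in\mathcal{G}(K)$ with all $n_j=1$, i.e. $S\in\mathcal{G}_\kappa(K)$ and $K\subset S\subset W$.

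The main obstacle is precisely this third step: one must check that neither the disc removal nor the band removal destroys membership in $\mathcal{G}(K)$ — connectedness, the neighbourhood property, and especially minimality of the genus must survive — and that the process terminates, for which the finiteness of the components of $M\setminus K$ is exactly what is needed (it bounds the number of components $c_j$ to be corrected). The initial smoothing and the genus-reduction step are routine once Lemma \ref{characterization} is available.
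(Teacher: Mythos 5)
Your proof is correct and follows essentially the same route as the paper: the key step (finitely many components of $S\setminus K$ from the hypothesis, then puncturing a disc in each component with no boundary circle and excising strips around crosscuts joining distinct boundary circles) is exactly the paper's surgery, performed one arc at a time instead of all at once. The preliminary reduction to some $S_0\in\mathcal G(K)$ inside a given neighbourhood, which you spell out via Lemma \ref{characterization}, is left implicit in the paper.
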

\begin{proof}
 It is enough to show that for any $S\in \mathcal G (K)$ there exists $S'\in \mathcal G_\kappa (K)$ such that $S'\subset S$. Note that as $M\setminus K$ has finitely many components, then $S\setminus K$ has finitely many components. Take an open disk $d_i$
in each connected component of $S\backslash K$ which does not contain any component of $\partial S$. Then define $S'_0 = S \backslash \cup _i d_i$.
For each connected component $C$ of $S'_0\backslash K$  having $k>1$ boundary components of $S$ (denoted $b_1,\ldots, b_k$) take
pairwise disjoint crosscuts $\gamma_j$ $0<j<k$ in $C$
connecting $b_j$ with $b_{j+1}$. Enlarge carefully the curves $\gamma_j $ to a small strip $c_j$ and define
$\tilde C=C\setminus (\cup_j c_j)$. Note that each $\tilde C$ is connected (each $b_i$ is a circle), contained
in $S'_0$ and has exactly one boundary component not intersecting $K$. Finally define $S'$ as the union of $K$
with the union of the $\tilde C$ for $C\in\Pi_0(S'_0\setminus K)$.
\end{proof}

%\begin{clly}
%\label{c5}
%If $S\in\mathcal G_\kappa(K)$, and $S'\in\mathcal G(K)$ is a subset of $S$, then $\kappa(S)\leq \kappa(S')$.
%\end{clly}

We still need another topological fact.
\begin{lemma}
\label{l4}
Let $W\in \mathcal G_\kappa(K)$ and let $V\in \mathcal G(K)$, $V\subset W$.
Then there exists $S'\in\mathcal G_\kappa(K)$ satisfying the following properties:
\begin{enumerate}

\item $V\subset S'\subset W$,

\item $\partial S'\subset \partial V$

\item $\kappa(S')=\kappa(K)$
\end{enumerate}
\end{lemma}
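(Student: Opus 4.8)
The plan is to take $W\in\mathcal G_\kappa(K)$ together with $V\in\mathcal G(K)$ with $V\subset W$, and to manufacture the desired surface $S'$ by carving $W$ down to a neighborhood of $V$ along suitable arcs of $\partial V$, in the spirit of the proof of Lemma \ref{base}. First I would note that since $W\in\mathcal G_\kappa(K)$, Remark \ref{annuli} tells us that $W\setminus K$ is a disjoint union of semi-open annuli $C_1,\dots,C_{\kappa(K)}$, one for each component of $W\setminus K$; the idea is to build $S'$ so that $S'\setminus K$ again has exactly one component inside each $C_i$ and each such component is a sub-annulus of $C_i$ whose "outer" boundary circle lies in $\partial V$. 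Since $V\in\mathcal G(K)$ has minimal genus, $V\setminus K$ also has genus zero and finitely many components (as $M\setminus K$ has finitely many components), and every component of $V\setminus K$ meets $K$ along part of its frontier.

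The key steps, in order, would be: (i) shrink $V$ slightly to a submanifold $V_0$ with $V_0\in\mathcal G(K)$, $\overline{V_0}\subset V$, so that $\partial V_0$ is a finite disjoint union of smooth circles; (ii) observe that since $V_0\subset W$ and each component $D$ of $V_0\setminus K$ lies inside exactly one annulus $C_i$, the "collar" $C_i\setminus \overline{D}$ — the part of the annulus on the far side of $D$ from $K$ — is a neighborhood of the boundary circle of $C_i$; (iii) in each $C_i$, select one boundary circle $\beta_i$ of $\partial V_0$ that separates $K$ from the end of $C_i$ not meeting $K$ (such a circle exists because $D$ meets $K$ and $C_i$ is an annulus, so the component of $\partial V_0$ facing the non-$K$ end does the job), and discard from $W$ everything strictly outside $\beta_i$; (iv) if a single component $C$ of $V_0\setminus K$ happens to carry several boundary circles, use pairwise disjoint crosscuts joining them, thickened to thin strips, to amputate the extra circles exactly as in Lemma \ref{base}, producing in each annulus $C_i$ a connected sub-annulus with a single outer boundary circle contained in $\partial V_0\subset\partial V$. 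The union of $K$ with these sub-annuli is the candidate $S'$; one then checks $V\subset S'$ fails — so in fact step (i) must be done the other way: shrink $W$, not $V$. Let me restate: I would keep $V$ as given, choose in each annular component of $W\setminus K$ a circle of $\partial V$ cutting off the free end, delete the free end of the annulus, then remove extra $\partial V$-circles by crosscut surgery; this yields $S'$ with $V\subset S'\subset W$, $\partial S'\subset \partial V$, and $S'\setminus K$ a disjoint union of annuli (so $S'\in\mathcal G_\kappa(K)$ by Remark \ref{annuli}, Lemma \ref{characterization} and Lemma \ref{biendef}), giving $\kappa(S')=\kappa(K)$.

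The main obstacle I anticipate is the bookkeeping in step (iv): a component of $W\setminus K$ is a single annulus, but $\partial V$ may meet it in many nested circles, and one must verify that after choosing the outermost "good" circle and doing the crosscut surgery, the resulting piece is still connected, still contains the relevant part of $V$, and still has its one remaining free boundary component inside $\partial V$ rather than freshly created by a crosscut. The cleanest way around this is to first apply Lemma \ref{base} to $V$ to replace it by $\tilde V\in\mathcal G_\kappa(K)$ with $\tilde V\subset V$, do the construction with $\tilde V$ in place of $V$ — so that $\tilde V\setminus K$ already has exactly one circle per annulus — and then at the very end enlarge $\tilde V$ back up: set $S'$ to be the union of $K$, the chosen sub-annuli, and $V$ itself, checking that adjoining $V$ does not change the number of boundary components because $V\subset W$ and $V\setminus K$ lies inside the annuli we have already trimmed $W$ to. Finally one invokes Lemma \ref{characterization} to confirm $S'\in\mathcal G(K)$ (every simple closed curve in $S'$ either disconnects it or meets $K$, since $S'\setminus K$ has genus zero) and Remark \ref{annuli} to conclude $\kappa(S')=\kappa(K)$, completing the proof.
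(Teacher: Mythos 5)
Your underlying picture is the right one: in each annular component $u$ of $W\setminus K$ there is an essential circle of $\partial V$ separating $K$ from the free end of $u$, and $S'$ should be obtained by keeping only the $K$-side of that circle. This is complementary to what the paper actually does (the paper adjoins to $V$ the disks $D_\gamma\subset u$ bounded by the \emph{inessential} components $\gamma$ of $\partial V\cap u$, so that $\partial S'$ consists precisely of the essential circles). However, the construction you finally commit to does not work as written, for two concrete reasons. First, the ``crosscut surgery'' step is both unnecessary and destructive. Once you delete the free collar of each $u$ beyond its essential $\partial V$-circle, the remaining ``extra'' circles of $\partial V$ are \emph{interior} to the resulting surface, not boundary components, so there is nothing left to remove; and if you nevertheless perform the strip removal of Lemma \ref{base}, the excised strips may meet $V$ (breaking $V\subset S'$) and their sides become new boundary arcs not contained in $\partial V$ (breaking $\partial S'\subset\partial V$). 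Second, the ``cleanest way'' via $\tilde V\in\mathcal G_\kappa(K)$ is worse, not better: the construction of Lemma \ref{base} produces $\tilde V$ with $\partial\tilde V\not\subset\partial V$ (its boundary is created by removed disks and strips), so trimming $W$ along $\partial\tilde V$ and then taking the union with $V$ gives you no control on $\partial S'$, and you do not verify that the resulting set is a submanifold or that its complement in $K$'s neighborhood is still a union of annuli with one boundary circle each.

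There is also a missing idea that both your approach and the paper's require, and which is in fact the heart of the paper's one-line argument: since $u$ is a semi-open annulus and $V$ is a connected open set containing $K$, \emph{at most one} component of $\partial V\cap u$ is non-trivial in $\pi_1(u)$, and all of $V\cap u$ lies on the $K$-side of it (a path inside the open set $V$ from a point of $u$ beyond that circle to $K$ would have to cross $\partial V$). You assert the existence of a circle ``separating $K$ from the free end'' but never address uniqueness, nor the fact that $V$ lies entirely on the $K$-side of it; without the latter, $V\subset S'$ is not guaranteed for the trimmed surface. If you establish this fact, your argument closes immediately by setting $S'=W$ minus the closed free collars beyond these essential circles, with no surgery at all; alternatively, the paper's route is to observe that every other component of $\partial V\cap u$ bounds a disk $D_\gamma\subset u$ and to take $S'=V\cup\bigcup_\gamma D_\gamma$, which visibly satisfies all three conclusions.
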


\pf Clearly, $\kappa(V)\geq \kappa(K)$.  If $\kappa(V)=\kappa(K)$, we let $S' = V$.  Otherwise, $\kappa(V)> \kappa(K)$. This means that there exists $u\in \Pi_0 (W\backslash K)$ such that $V$ has at least two boundary components in $u$.  As $u$ is a semi-annulus (see Remark \ref{annuli}), only one of these boundary components is non-trivial in $\pi_1 (u, x_0)$. Let $\Gamma$ be the set of all $\gamma\in \Pi_0 (\partial V)$ that are trivial  in $\pi_1 (u, x_0)$.  Then, any $\gamma \in \Gamma$ is the boundary of a disk $D_\gamma \subset u$.  The surface $S' = V\cup_{\gamma \in \Gamma } D_\gamma$ satisfy all the conclusions of the lemma.

%\pf Let $\mathcal B_W(V)\subset \Pi_0(\partial V)$ be defined as follows: $b\in\mathcal B_W(V)$ if there exists a component $d_b$ of $M\setminus b$ that is contained in $W$.
%Let
%$$
%S'=V\cup (\cup\{ d_b\ :\ b\in\mathcal B_W(V)\}).
%$$

\subsection{An invariant $d:1$ continuum $K\neq M$, $d>1$, has nexus two.}

We turn now to dynamics. This subsection is devoted to the proof of the last assertion of Theorem \ref{intro2}.
In fact we prove a more general result, that applies also to invariant continua, not necessarily attracting.

The argument below rests on a simple fact: if $f:S_1\to S_2$
is a $d:1$ covering, $d>1$, and $S_2$ is an annulus, then $S_1$ is also an annulus.

\begin{thm}
\label{t2}
Let $K\subsetneq M$ be an $f$- invariant continuum such that: \begin{itemize}
\item $f|_K:K\to K$ is a $d:1$ covering map, $d>1$,
\item $f^{-1}(K)\backslash K$ is closed,
\item $M\backslash K$ has finitely many connected components. \end{itemize}
Then, $K$ has genus zero and nexus two.
\end{thm}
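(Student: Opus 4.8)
The plan is to pass to an appropriate neighborhood of $K$ on which $f$ is a covering map, and then exploit the multiplicativity of the Euler characteristic under coverings together with the structural results on $\mathcal G_\kappa(K)$ proved earlier. First I would invoke Lemma \ref{covering} (applied with $X=M$ and the attracting-pair structure coming from the invariance of $K$ and the fact that $f^{-1}(K)\setminus K$ is closed) to produce a neighborhood $U$ of $K$ such that $f|_U$ is a $d:1$ covering onto $f(U)$, with $f^{-1}(K)\cap U = K$ and $\overline{f(U)}\subset U$. Combining this with Lemma \ref{base} (which applies since $M\setminus K$ has finitely many components, so $\mathcal G_\kappa(K)$ is a neighborhood basis), I would choose $W\in\mathcal G_\kappa(K)$ with $\overline W\subset U$, and then set $V$ to be the connected component of $f^{-1}(W)$ containing $K$; since $f|_U$ is a covering and $f(V)\subset W$ with $V\cap f^{-1}(K)=K$, the map $f|_V: V\to W$ is again a $d:1$ covering of surfaces.

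The key step is then the Euler characteristic count. Since $f|_V: V\to W$ is a $d:1$ covering, $\chi(V)=d\,\chi(W)$, i.e. $2-2g(V)-\kappa(V) = d(2-2g(W)-\kappa(W))$. By Remark \ref{annuli}, $W\setminus K$ is a disjoint union of $\kappa(K)$ semi-open annuli, so $g(W)=0$ and $\kappa(W)=\kappa(K)=:k$; thus $\chi(W)=2-k$. Now I would use Lemma \ref{l4} to replace $V$, if necessary, by a surface $S'\in\mathcal G_\kappa(K)$ with $V\subset S'$, and I would argue that the covering structure is compatible so that effectively $g(V)=g(K)=0$ (since $V\subset S'$ and $S'\in\mathcal G(K)$ forces $g(V)\le g(S')=g(K)$, while $f|_V$ being a covering onto the genus-zero $W$ forces $g(V)$ to be controlled) and $\kappa(V)$ equals the number of ends of $V\setminus K$, which is at least $k$. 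Writing $\kappa(V)=\kappa'$ and $g(V)=0$, the equation becomes $2-\kappa' = d(2-k)$, i.e. $\kappa' = 2 - d(2-k) = 2 + d(k-2)$. Since $\kappa'\ge 1$ (the boundary is nonempty as $K\subsetneq M$) and $k\ge 1$, and since $d\ge 2$, the only way to keep $\kappa'\ge 1$ is $k=2$ (giving $\kappa'=2$), because $k=1$ would give $\kappa' = 2-d \le 0$, impossible, and $k\ge 3$ would force $\kappa'=2+d(k-2)\ge 2+d$, which I would rule out by showing that $f|_V\colon V\to W$ restricted to the annular pieces must preserve or simplify the end structure — concretely, each component of $V\setminus K$ maps onto a component of $W\setminus K$, and a $d:1$ covering of a semi-open annulus is a semi-open annulus, so $V\setminus K$ also has exactly $k$ components, giving $\kappa(V)=k$ directly; plugging $\kappa'=k$ into $2-k=d(2-k)$ with $d>1$ forces $k=2$. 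Hence $g(K)=g(W)=0$ and $\kappa(K)=k=2$.

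The main obstacle I anticipate is making rigorous the claim that $V$ (the component of the preimage of $W$ through $K$) is itself a nice submanifold in $\mathcal G_\kappa(K)$, or at least that one can arrange the $\kappa$ and $g$ bookkeeping for $V$ to match the intended values: a priori $f^{-1}(W)$ need not be connected, $V$ need not have smooth boundary, and $V$ need not lie in $\mathcal G(K)$, so I would need to either smooth $\partial V$ and apply Lemma \ref{l4}/Lemma \ref{base} to pass to a surface in $\mathcal G_\kappa(K)$ sandwiched between $V$ and $U$, or argue directly with ends. The cleanest route, which I would pursue, is to observe that each connected component $C$ of $W\setminus K$ is a semi-open annulus and that $f^{-1}(C)\cap V$ is a covering space of $C$ of some degree $d_C$ with $\sum_C d_C = d$ (over components mapping to a fixed target — actually by invariance of $K$ each component of $V\setminus K$ covers some component of $W\setminus K$), and a connected covering of a semi-open annulus is a semi-open annulus; counting ends then gives that $V\setminus K$ has exactly $k$ ends, so $\kappa(V)=k$ and $g(V)=0$. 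Substituting into $\chi(V)=d\chi(W)$ yields $2-k=d(2-k)$, and since $d>1$ this forces $k=2$, completing the proof. I would also note explicitly, as flagged in the subsection heading's preamble, the underlying principle that a $d:1$ covering of an annulus is an annulus, which is exactly what drives the final contradiction.
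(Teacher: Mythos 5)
Your overall strategy is the same as the paper's: choose $W\in\mathcal G_\kappa(K)$ inside a neighborhood of $K$ where $f$ is a local homeomorphism and $f^{-1}(K)$ meets the neighborhood only in $K$, pull $W$ back to the component $V$ of $f^{-1}(W)$ containing $K$, check that $f|_V:V\to W$ is a $d:1$ covering with $g(V)=g(W)$ and $\kappa(V)=\kappa(W)$, and then let $\chi(V)=d\,\chi(W)$ together with $d>1$ force $\chi=0$. Your end-counting argument for $\kappa(V)=\kappa(W)$ (a connected covering of a semi-open annulus is a semi-open annulus) is exactly the paper's.

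There is, however, one genuine error in the write-up: you assert that Remark \ref{annuli} gives $g(W)=0$. It does not --- the remark says that $W\setminus K$ has genus zero, not $W$ itself. Since $W\in\mathcal G(K)$ one has $g(W)=g(K)$, and $g(K)=0$ is precisely half of what the theorem asks you to prove; as written, your identity $2-\kappa'=d(2-k)$ and the later parenthetical $g(V)=g(K)=0$ assume the conclusion. The repair is immediate and is what the paper does: keep the genus terms. From $K\subset V\subset U$ with $U,W\in\mathcal G(K)$ you get $g(V)=g(W)=:g$, and with $\kappa(V)=\kappa(W)=:k$ the covering identity reads $2-2g-k=d(2-2g-k)$; since $d>1$ this forces $2-2g-k=0$, and as $k\geq 1$ this yields $g=0$ and $k=2$ simultaneously, so $W$ is an annulus. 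A smaller point: Lemma \ref{covering} is stated for normal \emph{attracting} pairs, while $K$ here is only assumed invariant (the theorem is explicitly meant to cover non-attracting continua), so you cannot invoke that lemma to produce the covering neighborhood; instead, as in the paper, choose $U\in\mathcal G_\kappa(K)$ disjoint from $S_f$ and from the closed set $f^{-1}(K)\setminus K$ (possible by Lemma \ref{base}), and observe directly that the degree of $f$ on the component of $f^{-1}(W)$ through $K$ equals the degree $d$ of $f|_K$ because all preimages of points of $K$ lying in $U$ lie in $K$.
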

\begin{proof}
As $M\backslash K$ has finitely many connected components, by Lemma \ref{base}, there exists  $U\in \mathcal G_\kappa(K)$ such that $U\cap S_f = \emptyset$ and $U\cap (f^{-1}(K)\backslash K )= \emptyset$. Also there exists $V\in \mathcal G_\kappa(K)$
such that the connected component $V'$ of $f^{-1}(V)$ containing $K$ is contained in $U$.  Our choice of $U$ implies that $f|_{V'}: V' \to V $ is a $d:1$
covering map.
Moreover, as $K\subset V' \subset U$ one has $g(V') = g(V) = g$.  We begin by proving that $\kappa (V') = \kappa (V)$. As $V\in {\mathcal G}_{\kappa}(K)$, the connected component of $V\backslash K$  are topological semi-open annulus (see Remark \ref{annuli}).  As $V'\backslash K$ covers $V\backslash K$, it follows that the connected components of $V'\backslash K$ are also annuli. This implies that there is exactly one connected component of $\partial V'$ in each connected component of $U\backslash K$, that is $\kappa(V') = \kappa (V) = \kappa (U)$.

The fact that $f|_{V'}: V' \to V $ is a $d:1$ covering map gives $\chi (V') = d \chi (V)$, that is, $2-2g(V')-\kappa (V') = d(2-2g(V)-\kappa (V))$. But as $g(V)= g (V') $ and $\kappa (V') = \kappa (V)$, it comes that $2-2g-k = d (2-2g-k)$.  Moreover, $d\neq 1$ implies $\chi (V) = \chi (V') = 0$, showing that
$V\in {\mathcal G}_{\kappa}(K)$ is an annulus.
\end{proof}

\noindent
{\bf Proof of Theorem \ref{intro2}.}\\
We include an easy proof here inspired in the discussions above.  A different, unified proof of both
Theorems \ref{intro1} and \ref{intro2} is given in the next section.\\
Take an annular neighborhood $U$ of $A$ such that $f^{-1}(A)\cap U = A$ . The connected
component $U_1$ of $f^{-1}(U)$ containing $A$ is also an annulus
because $U\subset B^0_A$ and $B^0_A \cap S_f = \emptyset$. Moreover,
$f: U_1 \to U$ is $d:1$ by the choice of $U$. Define by induction $U_n$ as to
be the connected component of $f^{-1}(U_{n-1})$ containing $A$.
Note that $U_n$ is an annulus contained in $B_A^0$ for all $n\geq 1$.
Assuming that $f: U_n \to U_{n-1}$ is a $d:1$ covering it will be shown that
so is $f: U_{n+1} \to U_{n}$.
Let $c_1$ and $c_2$ be the connected component of $U_n\backslash A$. Each one is an
annulus one of whose boundary components is contained in $A$.
Let $c'_i$ be the connected component of $f^{-1}(c_i)$ that is contained in $U_{n+1}$.
Then $c'_i$ is an annulus, and $f: c'_i\to c_i$ a covering map.
It is claimed now that the component of the boundary of $c'_i$ intersecting $f^{-1}(A)$
is contained in $A$: otherwise, both connected component of the boundary of $c'_i$ are mapped by $f$ in $A$,
 which is impossible. So $f^{-1}(A)\cap (\overline{c_1^{'} \cup c_2^{'}})=A$, and as $f: U_{n+1} \to U_{n}$ is a covering map, then
such covering is $d:1$. As $\bigcup _n U_n = B_A^0$ the result follows.
 %as $f: U_{n+1} \to U_n $
%is a covering map and  therefore must carry the interior of $C_i$ , $i=1, 2$ onto the interior
%of $C$ or $U_n\backslash C$.
%This implies that $f: C_1 \to C$ is $l:1$, $l\leq d$. So, $f$ is $l:1$
%in $U_{n+1}$ because one can take a simple closed essential curve
%around $A'$ where the map is $l:1$.  One concludes that $l=k=d$.

\subsection{Unified proof.}

Let $M$ be a two dimensional manifold and $(f,U)$ be a normal attracting pair.
\begin{prop}
\label{p2}
There exists a neighborhood $U'$ of $A$ such that $(f,U')$ is equivalent to $(f,U)$, $f|_{U'}$ is a $d:1$ covering and $U'\in \mathcal G_\kappa(A)$.
\end{prop}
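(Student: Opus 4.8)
The plan is to start from the neighborhood $U$ provided by Lemma \ref{covering} — so that $(f,U)$ is equivalent to $(f,U')$ and $f|_U$ is a $d:1$ covering onto $f(U)$ — and then shrink $U$ within its equivalence class until it lands in $\mathcal G_\kappa(A)$, while preserving the covering property. By Proposition \ref{p1}, $M\setminus A$ has finitely many components, so Lemma \ref{base} applies: $\mathcal G_\kappa(A)$ is a basis of neighborhoods of $A$. Hence I can choose $W\in\mathcal G_\kappa(A)$ with $\overline W\subset U$. The issue is that replacing $U$ by $W$ need not preserve either equivalence (the attracting set must stay the same) or the covering property (a component of $f^{-1}(W)$ could be a nontrivial cover of a component of $W$, or $f|_W$ could fail to be onto $f(W)$ nicely).

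First I would fix the equivalence: using general fact (2) from Section 2, $(f,U'')$ is equivalent to $(f,U)$ whenever $f^{n+1}(U)\subset U''\subset f^n(U)$ for some $n$. So I pick $n$ with $f^n(U)\subset W$ (possible since $\overline{f^k(U)}$ shrinks to $A$) and set $U_1 = f^{-1}(f^{n+1}(U))\cap W$, which is sandwiched appropriately and is a neighborhood of $A$; by the argument in the proof of Lemma \ref{covering} this $U_1$ is equivalent to $(f,U)$ and $f|_{U_1}$ is a $d:1$ covering onto its image. The remaining problem is that $U_1$ obtained this way sits inside $W\in\mathcal G_\kappa(A)$ but may itself fail to lie in $\mathcal G_\kappa(A)$: it could have extra genus-reducing curves or extra boundary components in a single semi-annular component of $W\setminus A$.

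Here is where I would invoke Lemma \ref{l4}, which is tailor-made for exactly this: with $W\in\mathcal G_\kappa(A)$ and $V = U_1\in\mathcal G(A)$ (note $U_1\in\mathcal G(A)$ automatically, since $g(A)\le g(U_1)\le g(W)=g(A)$ as $A\subset U_1\subset W$ and $W$ has minimal genus), Lemma \ref{l4} produces $U'\in\mathcal G_\kappa(A)$ with $U_1\subset U'\subset W$, $\partial U'\subset\partial U_1$, and $\kappa(U')=\kappa(A)$. The construction in Lemma \ref{l4} only \emph{adds} to $U_1$ disks $D_\gamma\subset W\setminus A$ capping off the null-homotopic boundary circles $\gamma$ of $U_1$ lying in semi-annular components of $W\setminus A$; it removes nothing. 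Since $U_1\subset U'\subset W$, the pair $(f,U')$ is still equivalent to $(f,U)$ by general fact (2) (the same sandwich $f^{n+1}(U)\subset U_1\subset U'$ and $U'\subset W$, then a further preimage intersection if needed — or more simply, $f^{n+1}(U)\subset U'\subset f^n(U)$ once $W\subset f^n(U)$, which I arrange by enlarging $n$).

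The last point to check — and the one I expect to be the main obstacle — is that $f|_{U'}$ is still a $d:1$ covering onto $f(U')$. The danger is that capping a boundary disk $D_\gamma$ of $U_1$ inside $W\setminus A$ could create a point of $U'$ whose fiber is not the "right" $d$ points, i.e. that $f^{-1}(f(U'))\cap U'$ properly contains what a covering would require, or that some point of $f(U')$ acquires fewer than $d$ preimages. To control this I would use the normality hypothesis more carefully: choose $W$ (hence the whole construction) small enough that $W\cap S_f=\emptyset$ and $W\cap(f^{-1}(A)\setminus A)=\emptyset$, exactly as in the proof of Theorem \ref{t2}; then any disk $D_\gamma\subset W\setminus A$ maps by $f$ — a local homeomorphism on $W$ — to a region disjoint from $A$, and $f|_{D_\gamma}$ is injective because $D_\gamma$ is simply connected and contains no critical point (a local homeomorphism of a disk into the plane whose image avoids... — here I would argue via the monodromy/lifting on the annulus as in Theorem \ref{t2}, using that each component of $W\setminus A$ is a semi-annulus and the covering $f|_{U_1}$ restricted to its preimage annuli is determined by degree on the core circles). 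Concretely, I would show that $f(U') = f(U_1)\cup\bigcup_\gamma f(D_\gamma)$ and that each $f(D_\gamma)$ is an embedded disk over which $f|_{U'}$ has the same even covering structure, so that $f|_{U'}:U'\to f(U')$ remains a $d:1$ covering. Once all three properties — equivalence, $d:1$ covering, and membership in $\mathcal G_\kappa(A)$ — are in hand for $U'$, the proposition is proved.
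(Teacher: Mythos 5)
Your setup (Lemma \ref{base} to get $W\in\mathcal G_\kappa(A)$ inside $U$, an iterate with $f^n(U)\subset W$, and the disk-capping construction of Lemma \ref{l4}) uses the same ingredients as the paper, but the \emph{order} in which you apply them creates a gap at exactly the point you flag as ``the main obstacle'', and that gap is not closed. Your $U'=U_1\cup\bigcup_\gamma D_\gamma$ is obtained by filling disks \emph{upstairs}, i.e. after taking the preimage $U_1=f^{-1}(f^{n+1}(U))\cap W$. Once you do this, $U'$ is no longer a union of full fibers of the covering $f|_U\colon U\to f(U)$: a point $y\in f(D_\gamma)$ has $d$ preimages in $U$, but nothing forces all of them to lie in $U_1\cup\bigcup_{\gamma'}D_{\gamma'}$ --- some may sit in uncapped components of $W\setminus U_1$ (the essential annular ones, which Lemma \ref{l4} does not adjoin) or in $U\setminus W$. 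If even one preimage is missing, $f|_{U'}$ cannot be a covering onto its image of any degree: near $A$ it is $d$-to-one, so a covering onto the connected image would have to be $d$-to-one everywhere. Your paragraph addressing this consists of statements of intent (``I would argue\dots'', ``I would show that\dots'') rather than an argument, and the claim that each $f(D_\gamma)$ is an embedded disk carrying ``the same even covering structure'' is precisely what needs proof. A second, related omission: for $(f,U')$ to be an attracting pair equivalent to $(f,U)$ you need $\overline{f(U')}\subset U'$, hence in particular $f(D_\gamma)\subset U'$, and you never show the capped disks do not map outside $U'$; also the sandwich you invoke (``$W\subset f^n(U)$\dots by enlarging $n$'') goes the wrong way, since enlarging $n$ shrinks $f^n(U)$.

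The paper avoids both problems by reversing the order: it builds the capped set \emph{downstairs}. One chooses $V$ with $\overline{f^{n+1}(U)}\subset V\subset f^n(U)$, forms $S'$ by adjoining to $V$ the components of $M\setminus V$ contained in $W$, and proves $\overline{f(S')}\subset S'$ --- the key step being that $f(d)\subset W$ for each adjoined component $d$, which rests on Lemma \ref{l2} (every component of $M\setminus W$ contains a component of $M\setminus U$, so $f(d)\subset U$ forces $f(d)\subset W$). Only then does one set $U'=f^{-1}(S')\cap U$. Because this $U'$ is by construction a saturated open subset of the covering $f|_U$, the $d{:}1$ covering property of $f|_{U'}$ is automatic, and membership in $\mathcal G_\kappa(A)$ is inherited from $S'$. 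To repair your proof you would either have to switch to this order, or prove directly that your capped set is saturated --- which amounts to redoing the paper's argument.
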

\begin{proof}
By Lemma \ref{base} there exists $W\in \mathcal G_\kappa(A)$, such that $\overline{ W}\subset U$. Note that every connected component of $U\setminus A$ contains exactly one connected component of  $W\setminus A$.
It follows that each component of $M\setminus W$ contains at least one component of $M\setminus U$ (otherwise $U$ would contain a component of $M\setminus A$,
that is not possible as stated in Lemma \ref{l2} of the previous section).

Then let $n>0$ be such that the closure of $f^n(U)$ is contained in $W$.
Choose some open set $V$ whose closure is a manifold with boundary and such that the closure of $f^{n+1}(U)$ is contained in $V$ and $V$ is contained in $f^{n}(U)$.
By Lemma \ref{l4}, there exists an open set $S'$ equal to the union of $V$ with those components of the complement of $V$ that are contained in $W$. \\
It will be proved that $\overline{f(S')}\subset S'$.
Note first that $\overline{f(V)}\subset V$ because $f^{n+1}(U)\subset V\subset f^n(U)$.\\
It is claimed now that if a component $d$ of the complement of $V$ is contained in $W$, then $f(d)\subset W$.
Denote by $b$ the boundary of $d$. As $f$ has no critical points in $U$, the boundary of $f(d)$ is contained in $f(b)$, that is contained in $V$,
and hence in $W$. Assume by contradiction that $f(d)$ is not contained in $W$. Then there exists a component of the boundary of $W$ contained in $f(d)$.
As each component of the complement
of $W$ contains at least one component of the complement of $U$,
it follows that $U$ does not contain $f(d)$. But this is a contradiction since $f(d)\subset f(U)\subset U$, hence $f(d)\subset W$, as claimed.\\
By definition of $S'$ it comes that the closure of $f(S')$ is contained in $S'$.

Now define $U'=f^{-1}(S')\cap U$. It is a covering because there are not critical points in $U$, and it is $d:1$ because its restriction to $A$ is $d:1$, and
$f^{-1}(A)\cap U\subset A$. Moreover $U'\in\mathcal G_\kappa(K)$ because $S'$ does, and $\overline{f(U')}\subset U'$ by the claim. Clearly the pair $(f,U')$ is
equivalent to $(f,U)$.
\end{proof}

\begin{clly}
\label{c3} Let $U\in\mathcal G_\kappa(A)$ where $A$ is the
attracting set defined by the attracting pair $(f,U)$ and $f:U\to f(U)$  a covering map. Then
$U\setminus f(U)$ is the union of a finite number of annuli.
\end{clly}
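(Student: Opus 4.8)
The plan is to combine the characterization of $\mathcal G_\kappa(A)$ with the covering hypothesis and the Euler-characteristic identity $\chi(U) = d\,\chi(f(U))$. First I would observe that since $U\in \mathcal G_\kappa(A)$, the set $U\setminus A$ is a finite disjoint union of semi-open annuli (Remark \ref{annuli}), so in particular $\kappa(U)$ is finite and $g(U)=g(A)$. Because $f:U\to f(U)$ is a covering and $f(U)$ is an open neighborhood of $A$ with $\overline{f(U)}\subset U$, I would first replace $f(U)$ by a slightly smaller neighborhood whose closure is a submanifold with boundary contained in the interior of $U$ (this is harmless for the statement, which only concerns the topology of the complement $U\setminus f(U)$ up to the annular structure; alternatively, one notes $f(U)$ itself inherits $\mathcal G_\kappa$-type properties from $U$ since $f$ has no critical points). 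The key structural point is then that each connected component of $U\setminus f(U)$ is contained in a single component of $U\setminus A$, hence sits inside one of the semi-open annuli of $U\setminus A$, and its "outer" boundary consists of components of $\partial f(U)$ while its boundary touching $A$ is empty or lies in $\partial U$.

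The heart of the argument is to show genus zero and the correct number of ends for each such component. Since $U$ and $f(U)$ both deformation onto neighborhoods of $A$ and $f$ restricted to $A$ is a $d:1$ covering, I would argue that $f:U\to f(U)$ being a $d:1$ covering forces $\chi(U)=d\,\chi(f(U))$; but $U$ and $f(U)$ are both neighborhoods of $A$ realizing the minimal genus, and by the annular structure from Remark \ref{annuli} one can arrange $\kappa(f(U))=\kappa(U)=\kappa(A)$ and $g(f(U))=g(U)=g(A)$, so $\chi(U)=\chi(f(U))$, and therefore (as $d>1$, or more precisely as the two characteristics are equal and related by the factor $d$ when $d\ne 1$) $\chi(U)=\chi(f(U))=0$. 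Feeding this back, each component $c$ of $U\setminus f(U)$ is a subsurface whose boundary is split between $\partial U$-type curves and $\partial f(U)$-type curves; a standard additivity of Euler characteristic, $\chi(U)=\chi(f(U))+\sum_c \chi(c) - (\text{correction along the glued circles})$, together with $\chi(U)=\chi(f(U))=0$, forces each $\chi(c)$ to vanish. Combined with the genus-zero property inherited from $U\setminus A$ (each $c$ lies in a semi-open annulus, hence is planar), this makes each $c$ an annulus. Finiteness is immediate since $U\setminus A$ has finitely many components and, $f(U)$ being a neighborhood of $A$ with submanifold boundary, each annular component of $U\setminus A$ contains only finitely many components of $U\setminus f(U)$.

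The main obstacle I anticipate is the bookkeeping along the boundary: making precise that one may choose $f(U)$ (or a $\mathcal G_\kappa$-representative equivalent to it) so that $\partial f(U)$ meets each annular component of $U\setminus A$ in a single essential curve, so that the Euler-characteristic accounting is clean and each complementary piece genuinely has two boundary circles rather than being a more complicated planar surface with several holes. This is exactly the kind of adjustment handled by Lemma \ref{l4} and Lemma \ref{base}, so I would invoke those to pass to a well-positioned representative before doing the count; once $f(U)\in \mathcal G_\kappa(A)$ with $\partial f(U)$ in "annular position", the conclusion that $U\setminus f(U)$ is a finite union of annuli follows from the $\chi=0$ computation and planarity. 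A secondary, minor point is to confirm that shrinking $f(U)$ to such a representative does not change the homeomorphism type of $U\setminus f(U)$ up to adding or deleting collars, which it does not.
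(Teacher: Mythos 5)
Your overall strategy (planarity of $U\setminus f(U)$ together with matching genus and nexus for $U$ and $f(U)$) is the right one, and your finiteness argument is fine, but the decisive step is not justified as written. Additivity of the Euler characteristic only gives $\sum_c\chi(\overline c)=\chi(U)-\chi(f(U))=0$, where $c$ runs over the components of $U\setminus f(U)$; this does not force each $\chi(\overline c)$ to vanish. A planar compact surface with boundary has $\chi=2-\kappa$, so a disc ($\chi=1$) and a pair of pants ($\chi=-1$) could cancel in the sum. To rule out disc components (and hence pin each component down as an annulus) you need precisely the observation your sketch omits and on which the paper's proof turns: each component of $U\setminus A$ and of $f(U)\setminus A$ is a semi-open annulus (Remark \ref{annuli}, once $f(U)\in\mathcal G_\kappa(A)$ is known), and the unique boundary circle of $f(U)$ lying in a given annular component $u$ of $U\setminus A$ is \emph{essential} in $u$, since inside $u$ it bounds the semi-open annulus $f(U)\cap u$ containing the end of $u$ at $A$. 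Two disjoint essential circles in an annulus cobound an annulus, which gives the conclusion directly, with no Euler characteristic bookkeeping and, importantly, no appeal to $d>1$. This last point matters: the corollary feeds (via the notion of simple pair and Lemma \ref{simple}) into the injective case as well, where your derivation of $\chi(U)=0$ from $\chi(U)=d\,\chi(f(U))$ says nothing.

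A second soft spot is the equality $\kappa(f(U))=\kappa(U)$. You propose to shrink $f(U)$ to a well-positioned representative via Lemmas \ref{base} and \ref{l4} and then assert that this changes $U\setminus f(U)$ only by collars; but whether the region between $f(U)$ and a smaller neighborhood is a union of collars is essentially the statement being proved, so this is circular. The clean route is to work with $f(U)$ itself: $\partial f(U)=f(\partial U)$ has at most $\kappa(U)$ components because $f$ is a covering on $U$, and at least $\kappa(A)=\kappa(U)$ components because $f(U)\in\mathcal G(A)$ and each of the $\kappa(A)$ components of $f(U)\setminus A$ must contain a boundary component of $f(U)$. With that in hand the essential-circle argument above closes the proof.
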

\begin{proof}
Note that $\overline{f(U)}$ is a submanifold, that the genus of $U$ and $f(U)$ coincide as both contain $K$, and that $U=(U\setminus f(U))\cup f(U)$. It follows that the genus of $U\setminus f(U)$ is equal to zero.
It is claimed now that $f(U)$ also belongs to $\mathcal G_\kappa(A)$. Using that the restriction of $f$ to $U$ is a covering, it follow that the number of components of the boundary of $f(U)$ is less than or equal to $\kappa(U)$. Now,
$f(U)\in\mathcal G(A)$ implies that $\kappa(U)\leq \kappa(f(U))$ Then, $\kappa(U)=\kappa(f(U))$, proving the claim.

It follows that both $U\backslash A$ and $f(U)\backslash A$ are disjoint union of annuli.  Moreover, as $f:U\to f(U)$  is a covering map, $\partial U$ is mapped onto $\partial f(U)$ and no component of $f(\partial U)$ can be homotopically trivial in $U$.  So, the connected components of $f(\partial U)$ are necessarily homotopic to connected components of $\partial U $, proving that $U\setminus f(U)$ is a finite union of annuli.
\end{proof}

It follows that $f(U)$ is a deformation retract of $U$. In
particular the attracting set is {\em simple}:

\begin{defi}
Let $(U,f)$ be an attracting pair with $f:U \to f(U)$ a $d:1$ covering map, $d\geq 1$, and consider $w\in f(U)$.  The pair
$(U,f)$ is {\it simple} if $i_*: \pi_1 (f(U),w) \to  \pi_1 (U,w) $ is onto, where $i_*$ is the map induced by the inclusion $i: (f(U),w) \to
(U,w)$.
\end{defi}

The following lemma, valid in any dimension, proves that simple attractors have the property we search for: being a $d:1$ covering in a neighborhood implies
the same property in the whole immediate basin.

\begin{lemma} \label{simple} If $(U,f)$ is a simple attracting pair with $f:U \to f(U)$ a $d:1$ covering map and $S_f\cap B_A^{0}=\emptyset$, then $f$ is $d:1$ in $B_A^{0}$.
\end{lemma}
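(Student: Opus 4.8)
The plan is to build, by induction, an increasing exhaustion $U = W_0 \subset W_1 \subset W_2 \subset \cdots$ of the immediate basin $B_A^0$ by open sets on which $f$ is already known to be a $d:1$ covering, so that the union $\bigcup_n W_n = B_A^0$ inherits the property. The natural candidate is to set $W_{n+1}$ equal to the connected component of $f^{-1}(W_n)$ that contains $A$ (or, more carefully, the saturation of $U$ inside $f^{-1}(W_n)$). Since $S_f \cap B_A^0 = \emptyset$, the map $f: f^{-1}(W_n) \cap B_A^0 \to W_n$ is automatically a local homeomorphism, hence a covering onto its image once one checks properness; the real content is to control the \emph{degree} of this covering and to check that the exhaustion really fills up $B_A^0$.

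First I would record the base case: by hypothesis $f: U \to f(U)$ is a $d:1$ covering, and since $(f,U)$ is an attracting pair with $\overline{f(U)} \subset U$, the set $U$ is a neighborhood of $A$ in $B_A^0$. Next, for the inductive step, suppose $W_n \supset U$ is open, connected, contained in $B_A^0$, with $f: W_n \to f(W_{n-1})$ (or onto an appropriate open set) a $d:1$ covering. Let $W_{n+1}$ be the component of $f^{-1}(W_n)$ meeting $A$. The key point where simplicity enters: I want to show the covering $f: W_{n+1} \to W_n$ has degree exactly $d$, not some multiple of it. The degree of this covering equals the index $[\pi_1(W_n) : f_*\pi_1(W_{n+1})]$. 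Now $U \subset W_{n+1}$, and on $U$ the covering is $d:1$; the subtlety is that $\pi_1(W_n)$ may be larger than the image of $\pi_1(f(U))$, which would inflate the degree. This is precisely excluded by iterating the simplicity hypothesis: $i_*: \pi_1(f(U),w) \to \pi_1(U,w)$ is onto, so any loop in $W_n$ based near $A$ is, after the homotopies provided by the deformation retraction of $U$ onto $f(U)$, already represented by a loop in $f(U) \subset$ (a previously constructed $W$), and one pushes this back through the tower. I would phrase this as: the inclusion $f(U) \hookrightarrow W_n$ induces a surjection on $\pi_1$, which combined with the known degree $d$ on $U$ forces $\deg(f|_{W_{n+1}}) = d$.

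The main obstacle I anticipate is exactly this $\pi_1$-surjectivity bookkeeping across the tower: one must argue that $W_n$ is not ``bigger'' in homotopy than the attracting set demands — equivalently, that every $W_n$ deformation retracts down toward $A$ compatibly with $f$, so that $\pi_1(W_n)$ is controlled by $\pi_1$ of a small neighborhood of $A$. I would handle this by proving, as an auxiliary claim, that $f(W_n)$ is a deformation retract of $W_{n+1}$ (this is the surface-specific input, available from Corollary \ref{c3}'s annulus description in dimension two, but the lemma is stated ``valid in any dimension,'' so the abstract version is: simplicity is inherited by $W_{n+1} \to W_n$), whence the index computation propagates. Finally I would verify $\bigcup_n W_n = B_A^0$: any $x \in B_A^0$ lies in the same component of some $f^{-n}(U)$ as a point of $A$, since $B_A^0$ is by definition the union of components of $\bigcup_n f^{-n}(U)$ meeting $A$ and these components are nested in the $W_n$; so $x \in W_n$ for $n$ large. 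Since each $f: W_{n+1} \to W_n$ is $d:1$ and $x \mapsto f(x)$ with $f(x)$ already in some $W_m$, the map $f: B_A^0 \to B_A^0$ (note $f(B_A^0) \subset B_A^0$) is a local homeomorphism with every fiber of size exactly $d$, hence a $d:1$ covering, which completes the proof.
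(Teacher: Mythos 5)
Your overall strategy is sound and, at bottom, it runs on the same mechanism as the paper's proof: the surjectivity of $i_*:\pi_1(f(U),w)\to\pi_1(U,w)$, combined with lifting of paths and homotopies, forces every preimage of a point to be one of the $d$ preimages already visible in $U$. The paper packages this as a direct fiber count (lift an arc $\beta$ from $w$ to $w_i$ at each $w_j$ to produce $d$ preimages $x_j$ of $w_i$, then use a lifted homotopy to show any other arc yields the same endpoints), while you package it as ``degree of a covering equals the index of the image subgroup'' over an exhaustion $U=W_0\subset W_1\subset\cdots$ of $B_A^0$. That reorganization is legitimate, your verification that $\bigcup_n W_n=B_A^0$ is essentially right, and the properness you defer is available: $W_{n+1}$ is open and closed in $f^{-1}(W_n)$, so by compactness of $M$ a limit of preimages in $W_{n+1}$ of points converging to $y\in W_n$ stays in $f^{-1}(W_n)\cap\overline{W_{n+1}}\subset W_{n+1}$, which gives the covering property and the path- and homotopy-lifting you use.

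The one genuine gap sits exactly where you place it, and the fix you propose does not close it. To get $\deg(f:W_{n+1}\to W_n)=d$ you need the inductive invariant that $\pi_1(f(U),w)\to\pi_1(W_n,w)$ is onto for every $n$; you only assert that ``simplicity is inherited,'' and the auxiliary claim you offer instead --- that $f(W_n)$ is a deformation retract of $W_{n+1}$ --- is stronger than needed, is nowhere proved, and leans on the two-dimensional annulus picture of Corollary \ref{c3} even though the lemma is claimed in every dimension. The invariant can be proved directly, with no retraction. Given a loop $\ell$ in $W_{n+1}$ based at $w\in f(U)$, the loop $f\circ\ell$ lies in $W_n$ and is based at $f(w)\in f(U)$; by the inductive hypothesis it is homotopic in $W_n$, rel basepoint, to a loop $\alpha\subset f(U)$. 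Lifting this homotopy through the covering $f:W_{n+1}\to W_n$ (the lift of $f\circ\ell$ at $w$ being $\ell$ itself, and the basepoint track lifting to the constant path at $w$) homotopes $\ell$ inside $W_{n+1}$ to the lift of $\alpha$ at $w$; by uniqueness of lifts this coincides with the lift under $f:U\to f(U)$ and hence lies in $U$, and the base-case simplicity then pushes it into $f(U)$. With this step supplied, the monodromy of $\pi_1(W_n,w)$ on the fiber over $w$ is generated by loops in $f(U)$, whose lifts permute $\{w_1,\dots,w_d\}$, so transitivity gives degree exactly $d$ and your induction closes. (One further bookkeeping point: if $A$ is not connected you must take $W_{n+1}$ to be the union of the components of $f^{-1}(W_n)$ meeting $W_n$, as you parenthetically note, and run the index argument componentwise.)
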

\begin{proof} Let $w\in f(U)$ and $\{w_1, \ldots , w_d\}$ be the $d$ preimages of $w$ in $U$. We will show that for all $i=1, \ldots, d$ , $w_i$ has exactly $d$ preimages in $f^{-1}(U)\cap B_A^{0}$.

If $\beta$ is an arc joining $w$ and $w_i$, then the lift $\beta _j$ of $\beta$ starting at $w_j$ defines a preimage $x_j$ of $w_i$, $i,j \in {1, \ldots , d}$. Besides, as we assume that there are no critical points in $B_A^{0}$, the points  $x_j, j=1, \ldots, d$ are all different. We will show that for any other arc $\gamma $ joining $w$ and $w_i$ the lifts $\gamma _j$ starting at $w_j$
have their other extremity in $\{x_1, \ldots, x_d\}$,  $i,j \in {1, \ldots , d}$. Note that this implies the result. Indeed, if $x$ is a preimage of
$w_i$ in $f^{-1}(U)\cap B_A^{0}$, and $\delta$ is an arc joining $w_i$ and $x$, then $f(\delta)$ is an arc joining $w$ and $w_i$, and therefore the lift of
$f(\delta)$ starting at $w_i$ has its other extremity in $\{x_1, \ldots, x_d\}$, showing that $x\in \{x_1, \ldots, x_d\}$.

By hypothesis there exists a loop $\alpha \subset f(U)$ with basepoint at $w$ such that
 $[\beta \gamma ^{-1}]_{\pi_1 (U,w)} =
[\alpha]_{\pi_1 (U,w)} $. As $f:U \to f(U)$ is $d:1$, the lifts $\alpha _j$ of $\alpha$ starting at $w_j$ have their other extremities at $\{w_1, \ldots , w_d\}$, $j= 1, \ldots, d$.  Let $h_j: S^1\times [0,1] \to U$ be the lift of the homotopy joining $\alpha$ and $\beta\gamma
^{-1}$ starting at $\alpha _j$, $j= 1, \ldots, d$.
  Then, $h_j ((t,1))$ is the lift of $ \beta\gamma ^{-1}$ starting at $w_j$. So, $h_j
((t,1))= \beta _j \rho$, where $\rho$ is an arc starting at $x_j$ and having its other extremity $z\in \{w_1, \ldots , w_d\}$.  This implies that
$\rho^{-1}$ is the lift of $\gamma$ starting at $z$, showing that
the lifts of  $\gamma$ starting at $\{w_1, \ldots , w_d\}$
have their other extremities at $\{x_1, \ldots, x_d\}$.
\end{proof}
\end{section}

\begin{section}{Examples and Applications.}

\subsection{Homotopy of the immediate basin.}

Given $f:\T^{2}\to \T^{2}$, let $f_*:\pi_1(\T^{2},x_0)\to\pi_1(\T^{2},f(x_0))$ be the homomorphism of fundamental groups induced by $f$.\\

Note that $f_*$ can be seen as a $2\times 2$ matrix with integer coefficients.

\begin{prop}
Let $A$ be an injective attractor associated to the attracting pair $(f,U)$. If $\cap_{k\in\Z}f_*^{k}(\Z^{2})=\{0\}$, then every closed curve contained in $B^0_A$ is null homotopic in $\T^{2}$.
\end{prop}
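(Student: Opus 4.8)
The plan is to exploit the covering structure on the immediate basin furnished by Theorem \ref{intro2} (or rather, by the combination of Proposition \ref{p2} and Lemma \ref{simple}): since $A$ is an injective attractor, the attracting pair is normal, and $f$ restricted to $A$ is a homeomorphism; thus $f$ is a $1:1$ covering of $B^0_A$ onto a suitable neighborhood. More to the point, for each $n$ the component $U_n$ of $f^{-n}(U)$ containing $A$ is mapped homeomorphically by $f^n$ onto $U_0=U$, and $B^0_A=\bigcup_n U_n$. First I would fix a closed curve $\gamma\subset B^0_A$ with basepoint $x_0$; since $\gamma$ is compact it lies in some $U_N$. The key observation is that $f^N$ maps $U_N$ homeomorphically onto $U$, so $(f^N)_*$ sends $[\gamma]\in\pi_1(U_N,x_0)$ isomorphically onto $[f^N\circ\gamma]\in\pi_1(U,f^N(x_0))$.

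Next I would pass to $\T^2$. Let $j:U_N\hookrightarrow\T^2$ and $i:U\hookrightarrow\T^2$ be the inclusions. On fundamental groups of the ambient torus we then have the relation $f_*^N\circ j_*=i_*\circ(f^N|_{U_N})_*$ as maps $\pi_1(U_N,x_0)\to\pi_1(\T^2,f^N(x_0))$, simply because $f^N\circ j=i\circ(f^N|_{U_N})$ as maps of spaces. Writing $a=j_*[\gamma]\in\pi_1(\T^2,x_0)\cong\Z^2$, this says $f_*^N(a)=i_*\bigl((f^N|_{U_N})_*[\gamma]\bigr)$, an element of the subgroup $i_*\pi_1(U,\cdot)\subset\Z^2$. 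Now the same argument applied at every level $N+k$ (using that $f^{N+k}$ maps $U_{N+k}$ homeomorphically onto $U$, and that $\gamma$, being in $U_N\subset U_{N+k}$ for $k\geq 0$... wait — I need $\gamma$ pushed \emph{forward}): more robustly, for each $k\geq 0$ the curve $f^k\circ\gamma$ lies in $U_{N-k}$ (valid while $N-k\geq 0$) or in $U$ itself once $k\geq N$, and $f^k\circ\gamma$ lies in $f(U)\subset U$ for all large $k$. So $f_*^k(a)$ lands in $i_*\pi_1(U)$ for every $k\geq N$; hence $f_*^k(\Z^2\cdot a)$... Actually the clean statement: for every $k\ge N$, $f_*^{k}(a)\in f_*^{k-N}\bigl(i_*\pi_1(U)\bigr)$, but more simply $a=(f_*^{N})^{-1}$ applied to something — here I must be careful since $f_*^N$ need not be invertible over $\Z^2$. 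The honest route: $f_*^N(a)\in i_*\pi_1(U)$, and iterating, $f_*^{N+m}(a)=f_*^m\bigl(f_*^N(a)\bigr)$; I want instead to show $a\in f_*^k(\Z^2)$ for all $k$, then invoke the hypothesis $\bigcap_k f_*^k(\Z^2)=\{0\}$.

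So the crux is: show $a\in f_*^{k}(\Z^2)$ for every $k\ge 0$. Given $k$, look at the curve $\gamma$ in $U_N$ with $N\ge k$ (enlarge $N$; still $\gamma\subset U_N$ since the $U_n$ are nested decreasingly, so actually $\gamma\subset U_N\subset U_k$). Then $f^k$ maps $U_N$ homeomorphically onto $U_{N-k}$, and $i_k:U_{N-k}\hookrightarrow\T^2$ satisfies $i\circ\text{(something)}$... Concretely $j_*[\gamma]=(i_{N-k})_*\circ(f^k|_{U_N})_*^{-1}$ — no. Let me state it as: $f^k\circ(\text{incl of }U_N)= (\text{incl of }U_{N-k})\circ f^k|_{U_N}$, giving $f_*^k\bigl(j_*[\gamma_{U_N}]\bigr)=(i_{N-k})_*\bigl((f^k|_{U_N})_*[\gamma]\bigr)$. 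But I want $a$ itself in the image of $f_*^k$, i.e. I should start with $\gamma'=(f^k|_{U_N})^{-1}$-lift — rather, choose the curve $\gamma_k:=f^k\circ(\text{the copy of }\gamma\text{ in }U_{N+k})$; since $\gamma\subset U_N=f^k(U_{N+k})$ homeomorphically, there is a unique $\tilde\gamma\subset U_{N+k}$ with $f^k\circ\tilde\gamma=\gamma$, and then $a=j_*[\gamma]=j_*[f^k\circ\tilde\gamma]=f_*^k\bigl(j_*'[\tilde\gamma]\bigr)\in f_*^k(\Z^2)$. This works for all $k\ge 0$. Therefore $a\in\bigcap_{k\ge 0}f_*^k(\Z^2)$; by a standard argument ($f_*$ injective as it represents a nonsingular — here possibly singular? if $\det f_*=0$ the hypothesis forces... in any case the stated hypothesis is exactly $\bigcap_{k\in\Z}f_*^k(\Z^2)=\{0\}$, and $\bigcap_{k\ge 0}$ contains $\bigcap_{k\in\Z}$, but we need the reverse; if $f_*$ is invertible over $\Q$ the two intersections agree; the paper's hypothesis as written gives the conclusion once we know $a\in\bigcap_{k\ge0}f_*^k(\Z^2)$ together with the observation that this descending intersection equals $\bigcap_{k\in\Z}$ under the running assumption that $\det f_*\neq 0$, which holds since otherwise $f$ could not be a local homeo of $\T^2$). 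Hence $a=0$, i.e. $\gamma$ is null-homotopic in $\T^2$.

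The main obstacle I anticipate is the bookkeeping in the last paragraph: correctly identifying, for a given $k$, a component $U_m$ with $m$ large enough that $\gamma$ lifts through $f^k$ to a curve inside $U_m\subset B^0_A$, and being scrupulous that $f^k$ restricted to that component is genuinely a homeomorphism onto $U_{m-k}$ (this uses $S_f\cap B^0_A=\emptyset$ — automatic here since $A$ is injective hence normal, so by Definition \ref{normal} $S_f\cap U=\emptyset$, and one must also rule out critical points in all of $B^0_A$; this is where Theorem \ref{intro1} is invoked: either $f$ is injective on $B^0_A$ or there are critical points there — and an injective attractor with the covering extension gives the injective alternative, or one simply works on the sub-basin where things are fine). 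A secondary subtlety is reconciling $\bigcap_{k\ge0}$ with the two-sided intersection in the hypothesis, handled by noting $\det(f_*)\neq 0$.
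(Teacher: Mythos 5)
Your argument is correct and is essentially the paper's own proof: both lift the closed curve through $f^k$ inside the immediate basin (using that $f$ is injective there by Theorem \ref{intro1}) to conclude $[\gamma]\in\bigcap_k f_*^k(\Z^2)=\{0\}$. The paper phrases this more tersely as $\alpha_k=f'^{-k}(\alpha)$ with $f'=f|_{B^0_A}$ injective, whereas you work through the nested components $U_n$, but the idea is the same.
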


Note that this does not say that $B^{0}_A$ is simply connected. For example, if $A$ is Plykin attractor, then $B_A^0$ is not simply
connected. Further, note that the hypothesis of the proposition is verified whenever $f_*$ is a hyperbolic matrix with determinant greater
than $1$.

\begin{proof}
Assume by contradiction that there exists a curve $\alpha$ in $B^{0}_A$ that is not null homotopic.
Now Theorem \ref{intro1} implies that $f':=f|_{B^{0}_{A}}$ is injective, so for every $k\in Z$, it holds that
$\alpha_k=f'^{-k}(\alpha)$ is a closed curve which is not null homotopic. Then $[\alpha ]\in \cap_{k\in \Z} f_*^{k}(\Z^2)$, a contradiction.
\end{proof}

\subsection{The hyperbolic normal attractor.} Let $f$ be a map of the cylinder $S^1\times \R$ given by
$$
f(z,y)=(z^d,\lambda y+\tau(z)),
$$
where $\lambda\in(0,1)$ and $\tau$ is a continuous function.

If $\tau=0$, then there exists a normal attractor of degree two, $A=S^1\times \{0\}$.
F.Przytycki (See \cite{prz}) used this map (with $\tau=0$) to show that $C^1$ $\Omega$-inverse stable
does not imply $C^1$ $\Omega$-stable.
The inverse limit is conjugated to a solenoid; M.Tsujii called it fat solenoidal attractor. He
gave a proof of the fact that for $\lambda$ close to $1$, and generic $\tau$ of class $C^2$, there exists a unique physical
measure that is absolutely continuous (see \cite{tsu}).
In \cite{bkru} some topological properties of this map were obtained: it has a global attractor that coincides
with the nonwandering set, and if $\tau$ is Lipschitz, then for $\lambda$ close to $1$ the attractor is an annulus.

These properties show the complexity of the dynamics of the family. We show here that this example is frequent.
If $\tau$ is of class $C^1$ then $f$ is $C^1$ and one can talk about hyperbolicity.
\begin{prop}
\label{p8}
Let $A$ be a normal and hyperbolic attractor in a two dimensional manifold $M$. If the degree of the restriction of $f$
to a neighborhood $U$ of $A$ is $d$, and $d>1$, then either $A$ is homeomorphic to a circle
and the restriction of $f$ to $A$ is conjugated to $z^d$, or $f$ is Anosov.
\end{prop}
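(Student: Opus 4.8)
The plan is to analyze a normal hyperbolic attractor $A$ of degree $d>1$ by combining the topological constraint from Theorem~\ref{t2} (that $A$ has genus zero and nexus two) with the smooth structure of hyperbolicity. First I would observe that since $(f,U)$ is normal and $A$ is hyperbolic, $A$ is connected on each of its finitely many pieces; by working on one transitive piece we may assume $A$ is connected. By Proposition~\ref{p1}, $M\setminus A$ has finitely many components, so Theorem~\ref{t2} applies and tells us $A$ is an \emph{essential continuum in an annulus}: it has genus zero and nexus two. In particular, by Proposition~\ref{p2} and Corollary~\ref{c3}, there is an annular neighborhood $U\in\mathcal{G}_\kappa(A)$ on which $f$ is a $d:1$ covering, $f(U)$ is a deformation retract of $U$, and by Lemma~\ref{simple} the restriction of $f$ to the immediate basin $B^0_A$ is a $d:1$ covering.

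The next step is to use hyperbolicity to understand the unstable direction. Since $A$ is an attractor, the stable manifolds fill a neighborhood (the immediate basin is foliated by stable manifolds of points of $A$), and $A$ itself is the union of the unstable manifolds of its points. The local product structure together with transitivity forces the unstable lamination of $A$ to be a genuine foliation of a neighborhood in $B^0_A$, or else $A$ has empty interior in $M$ and the unstable leaves are curves. I would split into two cases according to whether $A$ has nonempty interior. If $\mathrm{int}(A)\neq\emptyset$, then $A$ is a compact surface with boundary carrying an Anosov-type structure; because $f(A)=A$ and $f|_A$ is $d:1$ with $d>1$, the Euler characteristic relation $\chi(A)=d\,\chi(A)$ forces $\chi(A)=0$, so $A$ is a union of annuli and tori. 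An $f$-invariant torus with an expanding-by-$d$ dynamics makes $f$ Anosov on $M=\T^2$ (using that $A$ then must be all of $M$, since the complement would be forward-invariant and attracted, contradicting that $A$ is the maximal invariant set, or by a connectedness/openness argument on $B^0_A$). If instead $\mathrm{int}(A)=\emptyset$, then each unstable leaf is a one-dimensional curve; since $A$ is a genus-zero, nexus-two continuum and $f|_A$ is an expanding $d:1$ covering of it, the unstable lamination has no holonomy obstruction and $A$ retracts onto the core circle of its annular neighborhood, with $f|_A$ semiconjugate to $z\mapsto z^d$. Hyperbolicity (expansion along unstables, no critical points) upgrades this to a topological conjugacy: $A$ is homeomorphic to $S^1$ and $f|_A$ is conjugate to $z\mapsto z^d$.

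The main obstacle I anticipate is the dichotomy's ``interior'' case and ruling out a genuinely intermediate situation — that is, showing that if $A$ has nonempty interior then $f$ is Anosov on \emph{all} of $M$, not merely on some invariant sub-surface with boundary. The key is that an Anosov structure on a compact surface-with-boundary that is invariant and strictly attracting is impossible unless the boundary is empty: the unstable foliation would have to be tangent to or transverse to $\partial A$ in a way incompatible with $\overline{f(U)}\subset U$ and the $d:1$ expansion, forcing $\partial A=\emptyset$, hence $A=M=\T^2$ and $f$ Anosov. I would prove this by a careful argument on the boundary circles of $A$: each is a union of (un)stable leaves or crosses them transversally, and in either case the covering degree together with the contraction forces a contradiction unless there is no boundary. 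The other delicate point is the rigidity step in the empty-interior case — promoting the semiconjugacy $A\to S^1$ to a conjugacy — which uses that a hyperbolic attractor with one-dimensional unstable leaves and no critical points, whose quotient dynamics is $z\mapsto z^d$, cannot have wandering intervals or nontrivial identifications, so the expansivity of $f|_A$ combined with the $d:1$ structure yields injectivity of the semiconjugacy.
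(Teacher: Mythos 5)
Your overall dichotomy (interior of $A$ empty versus nonempty) is the same as the paper's, but both branches have gaps, and the second one is serious.

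In the nonempty-interior case you treat $A$ as ``a compact surface with boundary carrying an Anosov-type structure'' and apply $\chi(A)=d\,\chi(A)$; but a priori $A$ is only a compact invariant set with nonempty interior, not a submanifold, so this step is unjustified, and your planned ``careful argument on the boundary circles'' is left entirely open. The paper avoids all of this: it shows via the $\lambda$-lemma that if $A$ contains an open disc then the backward images of that disc accumulate on the stable manifold of a periodic point, so $W^s(x)\subset A$, hence every stable and unstable manifold of a point of $A$ lies in $A$, and local product structure makes $A$ open; being also closed in a connected $M$, $A=M$ and $f$ is Anosov. That is the clean route you should take here.

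The empty-interior case is where the real content lies, and your proposal does not contain the key idea. You assert that $A$ ``retracts onto the core circle'' of its annular neighborhood with $f|_A$ semiconjugate to $z\mapsto z^d$, and that expansivity ``upgrades'' this to a conjugacy; but nothing in your argument rules out $A$ being a genuinely more complicated essential continuum or lamination in the annulus (the whole difficulty is to show $A$ \emph{is} a circle). The paper's mechanism is: take a fixed point $p\in A$; its stable manifold contains a crosscut of the annular neighborhood $U$; since $d>1$ there is a preimage $x\neq p$ of $p$ in $A$; density of $W^u(p)$ in $A$ gives a point $y\in W^u(p)\cap W^s(x)$, so $W^u(p)$ winds around the annulus; and then one shows $f(y)=p$, because otherwise $W^u(p)$ would have infinitely many self-intersections and, since $\operatorname{int}(A)=\emptyset$, the complement $M\setminus A$ would have infinitely many components, contradicting the finiteness guaranteed by normality (Proposition \ref{p1}). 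This forces $W^u(p)$ to close up into a circle containing $f^{-n}(p)\cap A$ for all $n$, whence $A$ is a circle and $f|_A$ is conjugate to $z^d$. Without an argument of this type --- in particular without invoking the finiteness of components of $M\setminus A$ --- your ``no holonomy obstruction / no wandering intervals'' claim is an assertion, not a proof.
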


\begin{proof}
It suffices to show that every (or one) unstable manifold is a circle. Let $p$ be a periodic point, assume it fixed,
and note that there exists an arc contained in its stable manifold that is a crosscut joining the connected components of the boundary
of the annular neighborhood $U$. As $d>1$, and $A$ hyperbolic, there exists a preimage $x$ of $p$, other
than $p$, that belongs to $A$. But the unstable manifold of $p$ is dense in $A$, so it must intersect the stable manifold of $x$,
say in a point $y$. This implies that the unstable manifold of $p$ is a curve that winds infinitely many in the annulus
(in other words, its lift to a band $\R\times [0,1]$ is unbounded) \\
It is claimed that $f(y)=p$ (so $y=x$).
Assume that $A$ has empty interior, which will be proved below . The assumption $f(y)\neq p$ implies that the unstable manifold
has infinitely many self intersections, and as the interior of $A$ is empty, a contradiction arrives because the complement of $A$
would have infinitely many components. This proves the claim. The claim implies that the unstable manifold of $p$ contains $f^{-n}(p)\cap A$
for every $n\geq 0$, and it follows that it must be a circle.
This, together with the next lemma, implies the result.
\end{proof}
\begin{lemma}
If a hyperbolic normal attractor has nonempty interior, then it is open.
\end{lemma}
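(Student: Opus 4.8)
The plan is to prove that $A\setminus\mathrm{int}(A)$ is empty, by showing that a single point of it would drag the whole attractor inside it. First I would fix a neighbourhood $U$ of $A$ as in Lemma~\ref{covering}, so that $f|_U$ is an open local homeomorphism and $f^{-1}(A)\cap U=A$; recall also that $f(A)=A$. A preliminary observation is that $W:=\mathrm{int}(A)$ is strongly invariant: $f(W)\subseteq W$ because $f|_U$ is open and $f(A)=A$, and if $x\in U$ with $f(x)\in W$ then $x\in f^{-1}(A)\cap U=A$ and, taking an open $V\subseteq A$ around $f(x)$, the open set $f^{-1}(V)\cap U$ is contained in $A$ and contains $x$, so $x\in W$. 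Hence $A\setminus W$ is closed and forward invariant. Next I would record the standard structure of the transitive hyperbolic attractor $A$: every $x\in A$ has a local unstable manifold $W^u_{\mathrm{loc}}(x)\subseteq A$, the attractor carries a local product structure, and the forward iterates of any local unstable arc are dense in $A$. If the unstable manifolds are two-dimensional, then $W^u_{\mathrm{loc}}(x)\subseteq A$ is already a neighbourhood of $x$, so $A=\mathrm{int}(A)$ and there is nothing to prove; if they are $0$-dimensional, then $A$ is a finite set, contradicting $\mathrm{int}(A)\neq\emptyset$. So from now on $\dim E^u=\dim E^s=1$.

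The core of the argument is the following local claim: \emph{if $q\in A\setminus\mathrm{int}(A)$ then $W^u_{\mathrm{loc}}(q)\subseteq A\setminus\mathrm{int}(A)$.} To see it, use a product chart around $q$: it carries $A$ onto $W^u_{\mathrm{loc}}(q)\times\Sigma_q$, where $\Sigma_q=W^s_{\mathrm{loc}}(q)\cap A$ and we use that $W^u_{\mathrm{loc}}(q)\subseteq A$, and it carries $\mathrm{int}(A)$ onto $W^u_{\mathrm{loc}}(q)\times\mathrm{int}(\Sigma_q)$. Since $q\notin\mathrm{int}(A)$, the stable coordinate of $q$ is not an interior point of $\Sigma_q$, hence the whole slice through $q$, which is exactly $W^u_{\mathrm{loc}}(q)$, is disjoint from $\mathrm{int}(A)$.

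Finally I would argue by contradiction: suppose there is $q\in A\setminus\mathrm{int}(A)$. By the local claim and the forward invariance of $A\setminus\mathrm{int}(A)$ one gets $\bigcup_{n\ge0}f^n\bigl(W^u_{\mathrm{loc}}(q)\bigr)\subseteq A\setminus\mathrm{int}(A)$, and since $A\setminus\mathrm{int}(A)$ is closed, its closure is still contained in $A\setminus\mathrm{int}(A)$, which is a proper subset of $A$. But the forward iterates of $W^u_{\mathrm{loc}}(q)$ are dense in $A$, so that closure equals $A$ --- a contradiction. Therefore $A=\mathrm{int}(A)$, i.e.\ $A$ is open.

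I expect the delicate point to be the local claim. One has to make precise, for the non-invertible $f$, which local unstable manifold through $q$ is meant --- this is legitimate since $f|_A$ is a covering map, so the distinct backward branches remain separated near $q$ --- and one must justify that the local product structure of the attractor identifies $\mathrm{int}(A)$ with a union of entire local unstable arcs. A secondary point to pin down is the density in $A$ of the forward iterates of a local unstable arc, which follows from the density of $W^u$ of a periodic point together with the inclination lemma.
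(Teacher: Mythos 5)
Your argument is correct in substance, but it runs in the opposite direction from the paper's. The paper propagates the \emph{interior} of $A$ along the stable direction: it takes an open disc $V\subset A$, pulls it back along a backward orbit converging to a periodic point $x$ (these backward images stay in $A$ precisely because of normality, the same fact you isolate as forward invariance of $A\setminus\mathrm{int}(A)$), and invokes the $\lambda$-lemma to conclude that the pulled-back discs accumulate on $W^s_{\mathrm{loc}}(x)$, so that $W^s_{\mathrm{loc}}(x)\subset A$; density of $W^u(x)$ then places every local stable manifold of a point of $A$ inside $A$, and the local product structure yields openness. You instead propagate the \emph{residual set} $A\setminus\mathrm{int}(A)$ along the unstable direction: it is closed, forward invariant, and saturated by local unstable plaques (your local claim, which amounts to the computation $\mathrm{int}(I\times\Sigma)=\mathrm{int}(I)\times\mathrm{int}(\Sigma)$ in a product chart), so density of the forward iterates of a single unstable arc forces it to be all of $A$, contradicting $\mathrm{int}(A)\neq\emptyset$. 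What your route buys is that it dispenses with the $\lambda$-lemma and with the periodic point; what it costs is that you need density of the forward iterates of an \emph{arbitrary} local unstable arc rather than density of $W^u$ of a periodic point — both are standard for transitive hyperbolic attractors, and you correctly indicate how to reduce one to the other via the inclination lemma. The two subtleties you flag are genuine: for a non-invertible $f$ a point of $A$ may carry several local unstable manifolds, one per backward orbit in $A$, which could a priori spoil the clean product description of $A$ near $q$ on which your local claim rests. But the paper's own proof invokes "the local product structure" and "the unstable manifold of a point" with exactly the same looseness, so your argument is at the same level of rigor as the original.
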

\begin{proof}
Let $V$ be an open disc contained in $A$ and let $x\in A$ be a periodic point of $f$. Let $k$ be the period of $x$, and note that there exists a sequence $\{y_n\}_{n\geq 0}$ such that $f^k(y_{n+1})=y_n$ for every $n\geq 0$, $y_n\to x$ and $y_0\in V$. As $V\subset A$, and $A$ is a normal attractor, the connected component $V_j$ of $f^{-jk}(V)$ that contains $y_j$ is contained in $A$. But the $\lambda$-lemma implies that $V_j$ accumulates at the stable manifold of $x$ as $j$ goes to $\infty$. In conclusion, the stable manifold of $x$ is contained in $A$. It follows that every stable and unstable manifold of a point in $A$ is contained in $A$. This implies that $A$ is open by the local product structure.
\end{proof}

A first question is: Does there exist a hyperbolic attractor with nonempty interior that is not Anosov?\\
Other question: Does there exist a map in the family above whose attractor is normal and with nonempty interior?

\subsection{Not normal examples.}
1) Let $f:S^1\times \R\to S^1\times \R$, $f(z,y) = (z^d, \lambda y)$, $|\lambda|<1$.  Then,  $\overline {f(U)}\subset U$ if $U = S^1\times(-\epsilon, \epsilon)$ for a suitable $\epsilon >0$.  One can perturb $f$ to a map $f'$ such that $f'(U)$ looks like in Figure 1 (a) (see [Prz]).  The resulting attracting set $A$ is not normal.

2) The map $q(z)=z^2$ is a $2:1$ covering from $U'=\D\setminus\{0,1/2,-1/2\}$ onto $\D\setminus\{0, 1/4\}$. Let
$\phi$ a diffeomorphism defined in $\D$ that fixes $0$, sends $1/4$ to $1/2$ and whose image is a disc that avoids
the point $-1/2$. Note that $f_1=\phi q$ fixes $0$ and $1/2$ and carries $U'$ into itself. Moreover if the derivative
of $\phi$ at $1/2$ is adequately chosen, then $1/2$ will be a repelling fixed point for the composition $\phi q$. But $(f_1,U')$ is not an attracting pair because the closure of $f_1(U')$ is not contained in $U'$. To achieve an attracting pair with the same geometrical features as $f_1$, we proceed to modify the map $f_1$ around the origin and the open set $U'$ around the points $0$ and $1/2$.
The new map $f$ will coincide with $f_1$ for $|z|>1/3$, and with $q(2z/\epsilon)$ in $|z|<\epsilon$, whit $\epsilon <1/10$. Thus $f$ will have a fixed critical point at the origin and an expanding invariant curve at $|z|=\epsilon/2$.
Finally, define $U$ as $\D\setminus (D_0\cup D_{1/2}\cup D_{-1/2})$, where $D_0=D(0;\epsilon)$ and $D_{1/2}$ is a neighborhood of $1/2$
such that $f(D_{1/2})\supset\overline D_{1/2}$  and $D_{-1/2}$ is a neighborhood of $-1/2$
such that $f(D_{-1/2})\supset\overline D_{1/2}$.
Now $(f,U)$ is an attracting pair that is not normal because the restriction to the attractor is not a covering.

\begin{figure}[ht]
\centering
\psfrag{u}{$\small{U}$}\psfrag{fu}{$f^{'}(U)$}\psfrag{0}{$0$}
\psfrag{a}{$A$}\psfrag{b}{$b\in \mathcal B$}\psfrag{1}{$\pi$}
\psfrag{1}{$-\frac{1}{2}$}\psfrag{0}{$0$}\psfrag{2}{$\frac{1}{2}$}
\psfrag{f}{$f(U)$}
\subfigure[]{\includegraphics[scale=0.1]{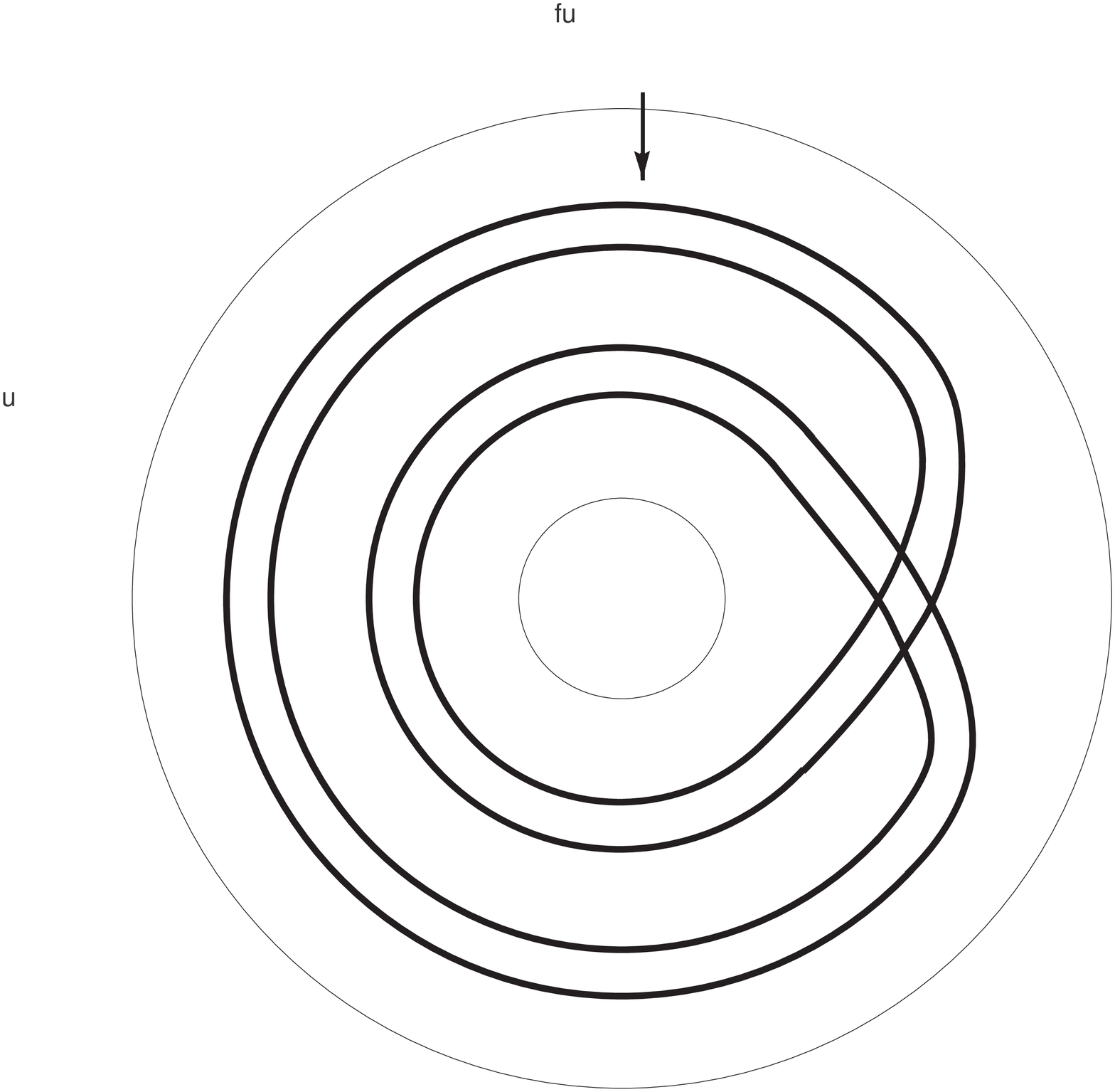}}
\subfigure[]{\includegraphics[scale=0.15]{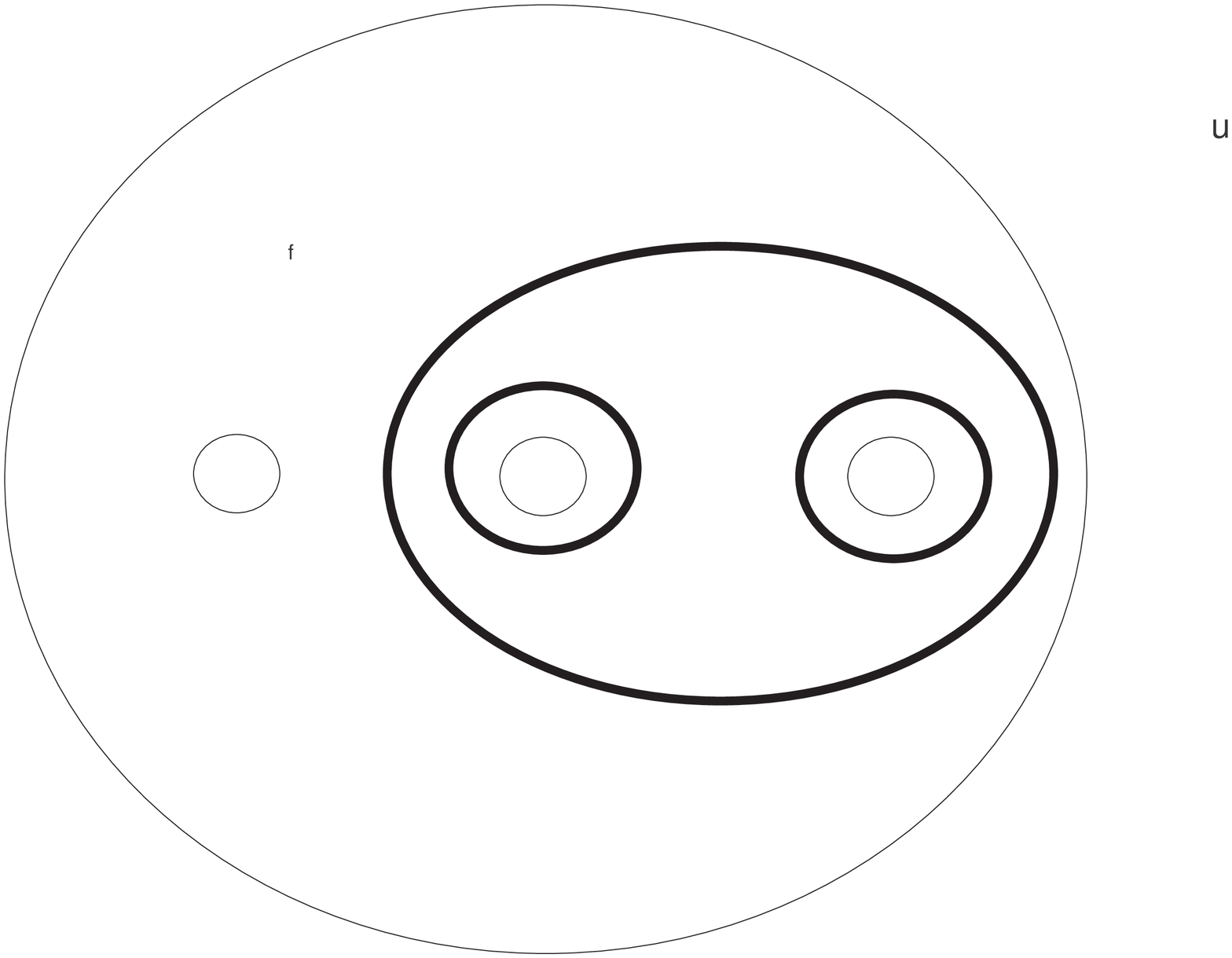}}

\caption{\label{noinyectivos}}
\end{figure}

\subsection{The counterexample.}
The results of Theorems \ref{intro1} and \ref{intro2} are not valid in manifolds of dimension greater than two.
The following example, introduced in \cite{ipr}, is a $C^1$ stable map that is not a diffeomorphism nor an expanding map.
The ambient manifold is $S^1\times S^2$, the two sphere seen as the compactification of the complex plane.  The map $f$ is given by
$f(z,w)=(z^2,(z+w)/3)$ for $w\in\C$, and $f(z,\infty)=(z^2,\infty)$. This map has degree two, it has no critical points, and its
non-wandering set is the union of a solenoid attractor and an expanding basic piece $S^1\times{\infty}$. The attractor $A$ is obtained
as the intersection of the forward images of the solid torus $S^1\times \D$, where $\D$ denotes the unit disc. It is easy to see
that $f$ is injective in $A$, and that the immediate basin coincides with the basin and is equal to $S^1\times \C$. As the map has
degree two, it follows that the restriction of the map to the immediate basin is not injective: the set $A'=f^{-1}(A)\setminus A$ is
not empty and contained in $B_A^0$.

\end{section}

\end{document}